\def\ML{\operatorname{ML}}
\def\MCG{\operatorname{MCG}}
\def\Thu{\operatorname{Thu}}
\newtheorem{thm}{Theorem}[section]
\newtheorem{lemma}[thm]{Lemma}
\newtheorem{prop}[thm]{Proposition}
\newtheorem{remark}[thm]{Remark}
\newtheorem{cor}[thm]{Corollary}
\theoremstyle{definition}
\numberwithin{equation}{section}
\begin{document}

\title[]{A remark on the word length in surface groups}
\author{Viveka Erlandsson}
\address{University of Bristol}
\email{v.erlandsson@bristol.ac.uk}
\thanks{\today}

\begin{abstract}
Let $\Sigma$ be a surface of negative Euler characteristic and $S$ a generating set for $\pi_1(\Sigma,p)$  consisting of simple loops that are pairwise disjoint (except at $p$). We show that the word length with respect to $S$ of an element of $\pi_1(\Sigma,p)$ is given by its intersection number with a well-chosen collection of curves and arcs on $\Sigma$. The same holds for the word length of (a free homotopy class of) an immersed curve on $\Sigma$. As a consequence, we obtain the asymptotic growth of the number of immersed curves of bounded word length, as the length grows, in each mapping class group orbit.
\end{abstract}

\maketitle

\section{Introduction}

Let $\Sigma$ be a compact, connected, orientable surface of genus $g$ and with $r$ (possibly none) boundary components. To rule out degenerate cases, assume $3g+r>3$. Choose a base point $p$ on $\Sigma$ and let $S$ be a generating set of the fundamental group $\pi_1(\Sigma,p)$ whose elements we identify with (homotopy classes of) loops based at $p$. We say the generating set $S$ is a \emph{simple generating set} if it consists of simple loops that are pairwise non-homotopic and disjoint except at the base point. Note that the standard generating set for a surface group is simple.  Also, any triangulation of $\Sigma$ with a single vertex $p$ is a simple generating set for $\pi_1(\Sigma,p)$.  Suppose that $S$ is a simple generating set. We denote the word length of an element $w$ in $\pi_1(\Sigma,p)$ with respect to $S$ by $|w|_S$. 

By a \emph{curve} on $\Sigma$ we mean a properly immersed closed curve (i.e. the image in $\Sigma$ of a proper immersion of the unit circle), not freely homotopic to a point or boundary component. By an \emph{arc} we mean the image of the unit interval under a proper immersion such that the two endpoints are on the boundary of the surface. We say two curves are homotopic if they are freely homotopic, and two arcs are homotopic if they are homtopic relative to the boundary. 

We define the intersection number, $\iota(w,\alpha)$, of a loop $w$ based at $p$ with a curve or arc $\alpha$ on $\Sigma\setminus\{p\}$ to be the minimum number of intersections points between transverse representatives of $\alpha$ and $w'$ as $w'$ runs over all loops homotopic to $w$ relative to the base point $p$. Although closely related to it, note that this is not the standard geometric intersection number on $\Sigma$ or $\Sigma\setminus\{p\}$. If $\lambda=\sum t_i\alpha_i$ is a weighted system of curves and arcs, i.e. $t_i\in\mathbb{R}_{+}$ and each $\alpha_i$ a curve or arc, then $\iota(w,\lambda)=\sum t_i\cdot\iota(w,\alpha_i)$. Our first theorem states that, for a simple generating set $S$, we can compute the word length of $w$ by its intersection number with a well-chosen set of curves and arcs on $\Sigma\setminus\{p\}$. We stress that this is a precise equality. 

\begin{thm}\label{currentloop} 
Let $S$ be a simple generating set for $\pi_1(\Sigma,p)$. There exists a collection of weighted curves and arcs $\lambda_S$ on $\Sigma\setminus\{p\}$ such that $\iota(w,\lambda_S)=|w|_{S}$ for all $w\in\pi_1(\Sigma,p)$.
\end{thm}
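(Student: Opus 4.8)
The plan is to translate everything into the dual combinatorial picture attached to the graph $G=\bigcup_{s\in S}s$. Since the loops in $S$ are simple, pairwise non-homotopic and meet only at $p$, the set $G$ is an embedded graph in $\Sigma$ with one vertex $p$ and one edge for each $s\in S$; since $S$ generates $\pi_1(\Sigma,p)$, cutting $\Sigma$ along $G$ yields a family of polygons (closed disks) glued along the edges according to the defining relators, together with half-open collar neighbourhoods of the components of $\partial\Sigma$ when $\partial\Sigma\neq\emptyset$. This realizes $\pi_1(\Sigma,p)$ as the fundamental group of a polygonal complex whose $1$-skeleton is $G$. I would then introduce the dual $1$-complex $\Gamma\subset\Sigma\setminus\{p\}$: one vertex in the interior of each polygon, and one edge dual to each $s\in S$ crossing $s$ transversally in a single point; $\Gamma$ is a spine of $\Sigma\setminus\{p\}$. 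The goal is to show that $|w|_S$ equals the minimal number of transverse intersections with $\Gamma$ over all loops representing $w$ rel $p$, and then to repackage this minimal crossing number as $\iota(w,\lambda_S)$ for an honest weighted system $\lambda_S$ of curves and arcs built from $\Gamma$.

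To build $\lambda_S$, note that each edge of $\Gamma$ is already an arc meeting exactly one element of $S$ exactly once; the only obstruction to $\Gamma$ being a system of curves and arcs is its vertices. At a vertex of $\Gamma$ lying in a $d$-gon of the decomposition, I would resolve the $d$ incident ends by inserting, for every unordered pair of ends, a short connecting arc carrying weight $1/(d-1)$, so that each end is used with total weight one; the edges meeting $\partial\Sigma$ are completed to arcs ending on $\partial\Sigma$ (or, when $\Sigma$ is closed, on the boundary of a small disk around $p$, so that they become arcs on $\Sigma\setminus\{p\}$). The resulting weighted collection of immersed curves and arcs is $\lambda_S$, and the weights are chosen precisely so that a loop in taut position with respect to $\Gamma$ has weighted intersection number with $\lambda_S$ equal to its number of crossings with $\Gamma$: a strand that passes straight through the resolved vertex region of a polygon contributes nothing, while an essential crossing of a dual edge is counted once.

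The identity $\iota(w,\lambda_S)=|w|_S$ then splits into two inequalities. For $\iota(w,\lambda_S)\le|w|_S$, take a geodesic word for $w$, realize $w$ by the corresponding edge-loop in $G$, and push it slightly off $G$; it crosses each dual edge once for each occurrence of the corresponding generator, hence meets $\Gamma$ exactly $|w|_S$ times, and the weight bookkeeping gives $\iota(w,\lambda_S)\le|w|_S$. For $|w|_S\le\iota(w,\lambda_S)$, take a representative $w'$ of $w$ (rel $p$) realizing $\iota(w,\lambda_S)$, put it in minimal position with respect to $\lambda_S$ — so in particular transverse to $\Gamma$ — and read off, in cyclic order along $w'$, the sequence of elements of $S$ dual to the edges of $\Gamma$ that $w'$ crosses. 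Using that $\Sigma\setminus\{p\}$ deformation retracts onto $\Gamma$ and the combinatorics of the polygonal complex, one verifies that this sequence is a word in $S$ representing $w\in\pi_1(\Sigma,p)$ of length at most $\iota(w,\lambda_S)$, which yields the inequality.

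The step I expect to be hardest is the lower bound, specifically the interaction between minimal position with respect to $\lambda_S$ and tautness with respect to $\Gamma$: one must rule out that $w'$ crosses $\lambda_S$ in a cheap way that does not register as an essential crossing of the spine, and one must confirm that the word obtained by reading crossings genuinely represents $w$ — not merely its class in $\pi_1(\Sigma\setminus\{p\})$ — which is where the role of the basepoint $p$, sitting inside the complementary disk of $\Gamma$, and the treatment of loops winding around vertices of $\Gamma$, must be pinned down. Getting the weight count to be exact, rather than correct only up to a uniform multiplicative or additive error, and dealing uniformly with the boundary collars, are the remaining points requiring care.
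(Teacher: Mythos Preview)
Your outline has the right shape --- dualize the one--vertex graph $G=\bigcup_{s\in S}s$, realize $|w|_S$ as a crossing number with the spine, then ``smooth out'' the spine to a weighted multicurve --- but the specific resolution you propose does \emph{not} produce a $\lambda_S$ with the desired property.  Take $\Sigma$ closed of genus $2$ with the standard generators $a_1,b_1,a_2,b_2$, so $\Sigma\setminus S$ is a single octagon and your resolved $\lambda_S$ consists of the $\binom{8}{2}=28$ chords between pairs of sides, each with weight $1/7$.  Now let $w=[a_1,b_1]$, which has $|w|_S=4$, and realize $w$ by the diagonal of the octagon from the vertex $v_0$ to the opposite vertex $v_4$.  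This diagonal separates the sides into two groups of four, so it crosses exactly the $4\cdot 4=16$ chords joining the two groups, giving weighted intersection $16/7<4$.  Hence $\iota(w,\lambda_S)\le 16/7<|w|_S$ for your $\lambda_S$, and the theorem fails.  (In fact the paper later shows that the correct $\lambda_S$ is \emph{unique} as a current, so any construction that is not homotopic to the paper's cannot work.)  The sentence ``a strand that passes straight through the resolved vertex region of a polygon contributes nothing'' is exactly where the error hides: with the all--pairs resolution, strands through a polygon pick up too little weight, not zero extra weight.

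The paper's $\lambda_S$ is a very particular resolution: in a simply connected $2m$--gon one draws two parallel arcs between each pair of \emph{opposite} sides (so only $2m$ arcs, not $\binom{2m}{2}$), in odd--gons one uses the ``side to opposite vertex'' pattern, and in once--holed polygons all arcs run to the boundary; everything carries weight $1/2$.  The easy inequality $\iota(w,\lambda_S)\le|w|_S$ then follows as you say.  The hard inequality is not obtained by reading a word off a minimal--position representative; instead one shows that a \emph{shortest} word, realized along the edges of $G$, already has no bigons with $\lambda_S$.  This is the substantial step: it is proved by a Dehn--algorithm style argument in the universal cover, using that the arcs of $\tilde\lambda_S$ always join (nearly) opposite sides of the polygons they traverse, so that any bigon forces the edge--path to run around more than half of some polygon and hence to be shortenable.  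Your proposed lower--bound argument would have to reproduce this combinatorics, and it cannot do so for the all--pairs resolution.
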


The collection $\lambda_S$ of (weighted) curves and arcs will be constructed explicitly. Although this is not the main focus of the paper, Theorem \ref{currentloop} yields in particular a linear time algorithm to compute the word length in $\pi_1(\Sigma,p)$ with respect to a simple generating set: a word is of shortest length (i.e written with the minimal number of generators possible) if and only if it does not form any bigons with $\lambda_S$ (see Proposition \ref{bigonprop} for the precise statement).

Continuing with our results, note that instead of elements of the fundamental group, one can consider free homotopy classes of (closed, immersed) curves on $\Sigma$. To that end, recall that a free homotopy class $[\gamma]$ of a closed curve $\gamma$ can be identified with a conjugacy class, again denoted $[\gamma]$, of a certain element in $\pi_1(\Sigma,p)$. The word length of the curve $\gamma$, denoted $\ell_S(\gamma)$, with respect to the generating set $S$ is defined as 
$$\ell_S(\gamma)=\min\{\vert\delta\vert_S\,:\,\delta\in[\gamma]\}. $$ 
The intersection number of a (free homotopy class of a) curve $\gamma$ and an arc or curve $\alpha$ is defined as usual as the minimum number of intersections of transverse representatives of $\alpha$ and $\gamma'$ as $\gamma'$ runs over all curves freely homotopic to $\gamma$. We note that this is the usual geometric intersection number of curves on $\Sigma$. In this setting: 

\begin{thm}\label{currentfree}
Let $S$ be a simple generating set for $\pi_1(\Sigma,p)$ and $\lambda_S$ the collection of weighted curves and arcs provided by Theorem \ref{currentloop}. Then $\iota(\gamma,\lambda_S)=\ell_S(\gamma)$ for all curves $\gamma\subset\Sigma$.
\end{thm}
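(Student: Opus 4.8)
The plan is to deduce Theorem~\ref{currentfree} from Theorem~\ref{currentloop} by comparing, for a free homotopy class $[\gamma]$, the based intersection numbers of the elements $\delta\in[\gamma]\subset\pi_1(\Sigma,p)$ with the geometric intersection number of $[\gamma]$ itself. Since by Theorem~\ref{currentloop} we have $|\delta|_S=\iota(\delta,\lambda_S)$ for every $\delta$, we get
\[
\ell_S(\gamma)=\min_{\delta\in[\gamma]}|\delta|_S=\min_{\delta\in[\gamma]}\iota(\delta,\lambda_S),
\]
so it suffices to prove that this minimum equals $\iota(\gamma,\lambda_S)$.

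One inequality is immediate: if $w'$ is a loop based at $p$, transverse to $\lambda_S$, homotopic rel $p$ to some $\delta\in[\gamma]$, and realizing $\iota(\delta,\lambda_S)=|w'\cap\lambda_S|$, then $w'$ is in particular a closed curve freely homotopic to $\gamma$, whence $\iota(\gamma,\lambda_S)\le|w'\cap\lambda_S|=\iota(\delta,\lambda_S)$; minimizing over $\delta$ gives $\iota(\gamma,\lambda_S)\le\ell_S(\gamma)$. For the reverse inequality it is enough to exhibit a single $\delta\in[\gamma]$ with $\iota(\delta,\lambda_S)\le\iota(\gamma,\lambda_S)$. I would start from a representative $\gamma_0$ of $\gamma$ in minimal position with $\lambda_S$, so that $|\gamma_0\cap\lambda_S|=\iota(\gamma,\lambda_S)$, and let $R_0$ be the component of $\Sigma\setminus\lambda_S$ containing the base point $p$ (recall $p\notin\lambda_S$). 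Assuming that $\gamma_0$ meets $R_0$, choose a point $q\in\gamma_0\cap R_0$ and a path $c$ from $q$ to $p$ inside $R_0$; since $c$ avoids $\lambda_S$, the based loop $\delta:=c^{-1}\ast\gamma_{0,q}\ast c\in\pi_1(\Sigma,p)$ (where $\gamma_{0,q}$ is $\gamma_0$ read off from $q$) lies in $[\gamma]$ and admits the transverse representative $c^{-1}\ast\gamma_0\ast c$, which meets $\lambda_S$ in exactly $|\gamma_0\cap\lambda_S|$ points. Hence $\iota(\delta,\lambda_S)\le\iota(\gamma,\lambda_S)$, and combining with the previous inequality and Theorem~\ref{currentloop},
\[
\ell_S(\gamma)\le|\delta|_S=\iota(\delta,\lambda_S)\le\iota(\gamma,\lambda_S)\le\ell_S(\gamma),
\]
which forces equality throughout.

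The main obstacle is therefore to ensure that $\gamma_0$ actually meets $R_0$, and this is where the explicit construction of $\lambda_S$ from the proof of Theorem~\ref{currentloop} must be used. I expect it to show that $R_0$ is $\pi_1$-essential: every component of $\overline{\Sigma\setminus R_0}$ is a disk (or, if $\Sigma$ has boundary, possibly an annulus parallel into $\partial\Sigma$), so that no essential closed curve of $\Sigma$ can be disjoint from $R_0$. Since $\gamma$ is a curve — hence essential and not boundary-parallel — its minimal-position representative $\gamma_0$ cannot avoid $R_0$, and the argument above goes through. One could instead phrase this step through the bigon criterion of Proposition~\ref{bigonprop}, arguing that a free representative realizing $\iota(\gamma,\lambda_S)$ can be based at $p$ without creating bigons with $\lambda_S$; the content is the same. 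The remaining ingredients are routine: the existence of a representative of $\gamma$ in simultaneous minimal position with the weighted system $\lambda_S$ (pass to geodesic representatives for an auxiliary hyperbolic metric with geodesic boundary), the invariance of the free homotopy class under conjugation of a based loop, the additivity $\iota(\gamma,\lambda_S)=\sum t_i\,\iota(\gamma,\alpha_i)$, and the observation that the degenerate case $\iota(\gamma,\lambda_S)=0$ is already excluded by the essentiality of $\gamma$.
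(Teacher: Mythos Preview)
Your reduction of Theorem~\ref{currentfree} to Theorem~\ref{currentloop} is \emph{not} the paper's route. The paper proves the two theorems in parallel, directly from Proposition~\ref{bigonprop}: for Theorem~\ref{currentfree} one picks a \emph{cyclically shortest} word $w'$ in the conjugacy class $[\gamma]$---this is already a concrete closed curve traced along generators---and part~(ii) of Proposition~\ref{bigonprop} says that $w'$, viewed as a free curve, forms no bigon with $\lambda_S$. Hence $|\lambda_S\cap w'|=\iota(\gamma,\lambda_S)$, and Lemma~\ref{atleast} gives $|\lambda_S\cap w'|=\ell_S(\gamma)$. The direction of the argument is opposite to yours: the paper starts on the word side and shows the associated curve is in minimal position, whereas you start from a free curve in minimal position and try to base it at $p$ without adding intersections.

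That difference is exactly where your gap lies. Your argument rests on the claim that the component $R_0$ of $\Sigma\setminus\lambda_S$ containing $p$ carries all of $\pi_1(\Sigma)$, i.e.\ that every piece of $\Sigma\setminus R_0$ is a disk or a peripheral annulus. You state this only as an expectation. Note that it is strictly stronger than $\lambda_S$ being filling: filling tells you every component of $\Sigma\setminus\lambda_S$ is a disk or peripheral annulus, including $R_0$ itself, and says nothing about whether a minimal representative of $\gamma$ must visit $R_0$ rather than some other component. Establishing your claim would require tracking, polygon by polygon and through the side identifications (including the Case~1/Case~2 gluing rule), which complementary regions of $\lambda'_S$ merge with the corner regions around $p$; this is plausible but is real work not contained in the paper. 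Your alternative phrasing via Proposition~\ref{bigonprop} does not bypass the problem, since it is still the free-to-based direction; it is the paper's \emph{reverse} implication (cyclically shortest $\Rightarrow$ no bigons as a free curve, established by the Dehn-type Lemmas~\ref{Dehn} and~\ref{path}) that makes the basing step unnecessary. Once you observe that part~(ii) of Proposition~\ref{bigonprop} is already available, the detour through $R_0$ can be dropped entirely.
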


\noindent As we will see below (Corollary \ref{unique}), the collection of weighted curves and arcs $\lambda_S$ is unique with this property. 

Theorem \ref{currentfree} has an immediate consequence which again shows how special simple generating sets are: \emph{the word length, with respect to a simple generating set $S$, of the $k^{th}$-power of a curve is the $k^{th}$-multiple of the word length of the curve, that is $\ell_S(\gamma^k)=k\cdot\ell_S(\gamma)$}. It is easy to give examples to show this is not true for general generating sets. 

Another comment, more central to the results of this paper, is that Theorem \ref{currentloop} implies that the length function $\ell_S$ on the set of curves on $\Sigma$ with respect to a simple generating set $S$ extends naturally to the space of currents on $\Sigma$. To be precise, let $\mathcal{C}(\Sigma)$ denote the space of currents and recall that the set of all curves on $\Sigma$ is a subset of $\mathcal{C}(\Sigma)$ and the geometric intersection number of curves extends to an intersection form on the space of currents. In particular, considering the collection of curves and arcs $\lambda_S$ as a current, we get that the function $\mu\mapsto\iota(\lambda_S,\mu)$ is a homogenous, continuous extension of the word length to $\mathcal{C}(\Sigma)$. Here homogenous means that $\ell_S(t\mu)=t\cdot\ell_S(\mu)$ for all $t\in\mathbb{R}_{+}$. We have:

\begin{cor}
Let $S$ be a simple generating set for $\pi_1(\Sigma,p)$. The length function $\ell_S$ extends to a homogenous continuous function $\ell_S: \mathcal{C}(\Sigma)\to\mathbb{R}_{+}$. \qed
\end{cor}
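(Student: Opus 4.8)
The plan is to deduce the corollary from Theorem~\ref{currentfree} together with standard properties of the space of currents $\mathcal{C}(\Sigma)$. Recall that $\mathcal{C}(\Sigma)$ is defined as the space of $\pi_1(\Sigma)$-invariant Radon measures on the space of (unoriented) geodesics of the universal cover, equipped with the weak-$*$ topology; the key facts I would invoke are that (i) the set of weighted closed curves (i.e. scalar multiples $t\cdot\gamma$ of conjugacy classes of primitive elements) is dense in $\mathcal{C}(\Sigma)$, (ii) the geometric intersection form $\iota:\mathcal{C}(\Sigma)\times\mathcal{C}(\Sigma)\to\mathbb{R}_{+}$ is continuous and bi-homogeneous, and (iii) scaling $\mu\mapsto t\mu$ is continuous. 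These are due to Bonahon; I will cite them rather than reprove them.

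First I would observe that the collection $\lambda_S$ produced by Theorem~\ref{currentloop} is a finite weighted union of curves and arcs on $\Sigma$, hence determines a well-defined element of $\mathcal{C}(\Sigma)$ (arcs on a surface with boundary also define currents, via the standard doubling or geodesic-lamination description; if $\Sigma$ is closed then $\lambda_S$ consists only of curves and the point is immediate). Then I would define the candidate extension $\widehat{\ell_S}:\mathcal{C}(\Sigma)\to\mathbb{R}_{+}$ by $\widehat{\ell_S}(\mu)=\iota(\lambda_S,\mu)$. Continuity of $\widehat{\ell_S}$ is then immediate from continuity of the intersection form in its second variable (fact (ii)), and homogeneity $\widehat{\ell_S}(t\mu)=t\,\widehat{\ell_S}(\mu)$ is immediate from bi-homogeneity together with fact (iii). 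It remains only to check that $\widehat{\ell_S}$ genuinely extends $\ell_S$, i.e. that its restriction to the set of curves agrees with the word-length function; but this is exactly the content of Theorem~\ref{currentfree}, which asserts $\iota(\gamma,\lambda_S)=\ell_S(\gamma)$ for every curve $\gamma$. Combining these three observations yields the corollary.

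The only genuine subtlety — and the place I would spend a sentence or two being careful — is whether the intersection number $\iota(w,\lambda_S)$ appearing in Theorem~\ref{currentloop}, which was defined in a slightly nonstandard way (minimizing over loops homotopic rel the basepoint $p$), is compatible with the genuine geometric intersection form on $\mathcal{C}(\Sigma)$ when we pass from based loops to free homotopy classes of curves. This compatibility is precisely what Theorem~\ref{currentfree} is engineered to provide: it reinterprets $\iota(\gamma,\lambda_S)$ as the ordinary geometric intersection number of curves, which is the restriction of the intersection form on currents. So once Theorem~\ref{currentfree} is in hand, the corollary requires no new work beyond invoking Bonahon's continuity and homogeneity of the intersection form.

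I do not expect any real obstacle here; the corollary is a formal consequence of Theorem~\ref{currentfree} and the well-established structure of $\mathcal{C}(\Sigma)$. If anything, the one thing worth double-checking is that arcs (in the bounded case) are correctly accommodated in whatever model of $\mathcal{C}(\Sigma)$ one uses — one can either work with $\mathcal{C}(\Sigma)$ as currents on the doubled surface, or note that the intersection pairing with a fixed weighted multicurve-and-arc system $\lambda_S$ extends continuously regardless — but this is a standard point and the argument above goes through verbatim.
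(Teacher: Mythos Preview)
Your proposal is correct and mirrors the paper's own reasoning exactly: the corollary is stated in the paper with a \qed\ and no formal proof, the preceding paragraph simply observing that $\mu\mapsto\iota(\lambda_S,\mu)$ is the desired extension by continuity and homogeneity of Bonahon's intersection form together with Theorem~\ref{currentfree}. Your additional remarks about the based-versus-free intersection subtlety and the accommodation of arcs are appropriate caveats, but they do not change the argument, which is the same as the paper's.
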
 

\noindent\textbf{Counting Curves.}
We now discuss an application of Theorem \ref{currentfree}. Being able to see the length function as given by intersection with a current allows us to count curves of bounded word length. Before making this precise, recall that combining results of \cite{MM2} and \cite{ES} one obtains that if $\rho$ is a metric of non-positive curvature on $\Sigma$, then for any immersed curve $\gamma_0$ 
\begin{equation}\label{metriclim}
\lim_{L\to\infty}\frac{\vert\{\gamma\in\MCG(\Sigma)\cdot\gamma_0\,:\,\ell_{\rho}(\gamma)\leq L\}\vert}{L^{6g-6+2r}}
\end{equation}
exists and is positive. 
Here $\MCG(\Sigma)\cdot\gamma_0$ denotes the mapping class group orbit of $\gamma_0$ and $\ell_{\rho}(\gamma)$ the length of a geodesic in the homotopy class of $\gamma$. As an application of Theorem \ref{currentfree} we will prove that limit \eqref{metriclim} also exists when replacing $\ell\rho$ with $\ell_S$ for a simple generating set $S$. 
This question was motivated by work of Chas, specifically by her empirical results concerning curves on the one-holed torus and their word lengths. It was she who first investigated the precise behavior of the growth of the number of curves of a given type with bounded word length (see, among others, \cite{Cha}). Chas conjectured that the limit \eqref{metriclim} exists also when considering word length and that it is closely related to the limit above (see Conjecture 1 of \cite{Cha}), which our theorem verifies: 

\begin{thm}\label{count}
Let $\Sigma$ be a surface of genus $g$ and with $r$ boundary components, where $3g+r>3$. Let $S$ be a simple generating set for $\pi_1(\Sigma,p)$ and $\gamma_0$ a closed immersed curve on $\Sigma$. Then 
$$\lim_{L\to\infty}\frac{\vert\{\gamma\in\MCG(\Sigma)\cdot\gamma_0\,:\,\ell_S(\gamma)\leq L\}\vert}{L^{6g+2r-6}}=C$$
exits and is positive. Moreover, $C=C_{\gamma_0}\cdot \mu_{Thu}(\{\nu\in\ML\,:\,\ell_S(\nu)\leq1\})$ where $C_{\gamma_0}>0$ is a constant depending only on $\gamma_0$ and $\mu_{Thu}$ is the Thurston measure on $\ML(\Sigma)$, the space of measured laminations of $\Sigma$. 
\end{thm}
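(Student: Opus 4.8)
The plan is to deduce Theorem~\ref{count} from Theorem~\ref{currentfree} together with the curve counting results of Mirzakhani \cite{MM2}, as extended by Erlandsson--Souto \cite{ES}. By Theorem~\ref{currentfree} and the corollary following it, the word length $\ell_S$ agrees, on every curve, with the continuous homogeneous function $F:=\iota(\lambda_S,\cdot)$ on the space of currents $\mathcal{C}(\Sigma)$. Hence, writing $d:=6g-6+2r$ and using homogeneity of $F$,
\[
\bigl|\{\gamma\in\MCG(\Sigma)\cdot\gamma_0 : \ell_S(\gamma)\leq L\}\bigr| \;=\; \sum_{\gamma\in\MCG(\Sigma)\cdot\gamma_0}\mathbf{1}\bigl[F(\tfrac1L\gamma)\leq 1\bigr] \;=\; L^{d}\,\nu_L(B),
\]
where $\nu_L:=L^{-d}\sum_{\gamma\in\MCG(\Sigma)\cdot\gamma_0}\delta_{\gamma/L}$ is the rescaled counting measure on $\mathcal{C}(\Sigma)$ and $B:=\{\mu\in\mathcal{C}(\Sigma):F(\mu)\leq 1\}$. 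So it suffices to show that $\nu_L(B)$ converges to a positive limit.

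The second step is to recall that $\nu_L$ converges, in the weak-$*$ topology on Radon measures on $\mathcal{C}(\Sigma)$, to $C_{\gamma_0}\cdot\mu_{Thu}$, where $\mu_{Thu}$ is the Thurston measure on $\ML(\Sigma)\subset\mathcal{C}(\Sigma)$ and $C_{\gamma_0}>0$ depends only on $\gamma_0$ and $\Sigma$; this is precisely the content of \cite{MM2,ES}. To transfer this convergence to the set $B$ one needs $B$ to be a continuity set: its closure must be compact in $\mathcal{C}(\Sigma)$ and its boundary must be $\mu_{Thu}$-null. Since $F$ is continuous, $B$ is already closed, and its compactness follows once we know that $\lambda_S$ fills $\Sigma$ — which is a consequence of Theorem~\ref{currentfree}, since every essential curve $\gamma$ satisfies $\iota(\gamma,\lambda_S)=\ell_S(\gamma)\geq 1>0$ and so meets $\lambda_S$, forcing $F$ to be proper and its sublevel sets compact. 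For the boundary, $\partial B\subseteq\{F=1\}$, and because $\mu_{Thu}$ is supported on $\ML(\Sigma)$, on which $F$ remains continuous, homogeneous, and positive away from $0$, the scaling property $\mu_{Thu}(t\cdot A)=t^{d}\mu_{Thu}(A)$ gives $\mu_{Thu}(\{F\leq t\})=t^{d}\,\mu_{Thu}(\{F\leq 1\})$ with $\mu_{Thu}(\{F\leq 1\})\in(0,\infty)$ — finite by compactness, positive since $\{F\leq 1\}$ has nonempty interior in $\ML(\Sigma)$. Continuity of $t\mapsto t^{d}$ then forces $\mu_{Thu}(\{F=1\})=0$.

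Combining these, the portmanteau theorem for weak-$*$ convergence of Radon measures yields
\[
\nu_L(B)\ \longrightarrow\ C_{\gamma_0}\cdot\mu_{Thu}(B)\;=\;C_{\gamma_0}\cdot\mu_{Thu}\bigl(\{\nu\in\ML:\ell_S(\nu)\leq 1\}\bigr)\;=:\;C,
\]
which is positive by the previous paragraph. Multiplying back by $L^{d}$ gives exactly the asserted limit, with the constant $C$ of the stated shape.

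I expect the main obstacle to be bookkeeping of hypotheses rather than a genuinely new idea. One must check that the counting theorem of \cite{ES} applies in the present generality — closed immersed curves on a surface with boundary, and with $\lambda_S$ a current that may include arcs as well as curves — and one must verify carefully that $\lambda_S$ fills $\Sigma$ in the strong sense needed for properness of $F$ and compactness of its sublevel sets (equivalently, that $\iota(\lambda_S,\cdot)$ is comparable to the length function of a hyperbolic metric). Both reduce to facts essentially already established, but the second requires some care about boundary-parallel components when treating compactness in $\mathcal{C}(\Sigma)$.
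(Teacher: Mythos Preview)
Your argument is correct and rests on the same three ingredients as the paper's: Theorem~\ref{currentfree} to identify $\ell_S$ with $\iota(\lambda_S,\cdot)$, the observation that $\lambda_S$ is filling, and the counting results of \cite{MM2,ES}. The difference is only in how the last ingredient is invoked. The paper quotes Corollary~4.4 of \cite{ES} (stated here as Theorem~\ref{es}), which says that the ratio of counts for two filling currents converges to a ratio of Thurston volumes; combined with Mirzakhani's Theorem~\ref{mir} for the Liouville current of a hyperbolic metric, this immediately gives the limit for $\lambda_S$ without further work. You instead invoke the more fundamental statement underlying that corollary --- the weak-$*$ convergence of the rescaled orbit measures $\nu_L$ to $C_{\gamma_0}\,\mu_{\Thu}$ --- and then carry out the portmanteau argument by hand, checking that $\{F\le 1\}$ is a compact continuity set. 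Your route is slightly more self-contained and makes explicit why $\lambda_S$ must fill and why the level set $\{F=1\}$ is $\mu_{\Thu}$-null; the paper's route is shorter because it offloads all of that bookkeeping to the cited corollary.
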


\noindent The work of Chas suggests moreover that the ratios of the constants $C_{\gamma_0}$ are rational numbers (again see Conjecture 1 of \cite{Cha}). In the case of the one-holed torus and orbits of curves with small self-intersection number, she provides strong numerical evidence for this, and related, conjectures. The methods we use in this paper provide no explicit constants.      

It should perhaps be pointed out that there is, in general, no algorithm to determine whether or not two words in $S$ are in the same mapping class group orbit. Still, the construction of the current $\lambda_S$ lets us count the number of curves in each orbit as in the theorem above. 

It is an interesting question to determine if Theorem \ref{count} still holds for more general generating sets. While it is possible that some of the ideas of this paper still apply, the precise methods can not be used as is. For example, consider the one-holed torus and let $a,b$ be the standard generators of its fundamental group. Then the word length with respect to the generating set $\{a, b, a^{10}\}$ is not given by any current. 

\begin{remark}
While writing this paper, Parlier and Souto came up with an alternative argument to obtain Theorem \ref{count}. We later combined the insights of their work and this paper to prove that Theorem \ref{count} also holds for general generating sets (see \cite{EPS}). However, Theorems \ref{currentloop} and \ref{currentfree} cannot be generalized beyond simple generating sets.  
\end{remark}


The paper is organized as follows. In Section \ref{background} we will give the necessary background and definitions needed. In Section \ref{sectionproof} we construct the collection of curves and arcs $\lambda_S$ for a simple generating set $S$ and prove Theorems \ref{currentloop} and \ref{currentfree}. Finally, in Section \ref{counting} we give some background on currents and prove Theorem \ref{count}.\\

\noindent{\bf Acknowledgements.} I am very grateful to Moira Chas, Hugo Parlier, and Juan Souto for their help and encouragement throughout this project. I also thank Juan Souto for pointing out Corollary \ref{unique} to me. Also, I would like to thank the University of Fribourg for giving me the opportunity to spend a semester there, and to acknowledge support from Swiss National Science Foundation grant number PP00P2\_128557. Lastly, I thank the referees for their useful comments. 


\section{Background }\label{background}


Throughout the paper $\Sigma$ will be a connected, orientable, compact surface of genus $g$ with $r$ boundary components such that $3g+r>3$. Fixing once and for all a base point $p$, we will always consider the fundamental group $\pi_1(\Sigma, p)$ based at $p$. If $w$ and $w'$ are loops based at $p$ and homotopic relative to $p$ we write $w\sim_p w'$. We identify the elements of $\pi_1(\Sigma, p)$ with homotopy classes of loops based at $p$. 

Recall that an \emph{arc} on a surface $\Sigma$ is an immersed arc whose endpoints are on its boundary $\partial\Sigma$ and whose interior is disjoint from  $\partial\Sigma$. That is, an arc is the image of a proper immersion of the unit interval to $\Sigma$, $i:I\to\Sigma$, such that $i^{-1}(\partial\Sigma)=\{0,1\}$. We further assume arcs to be non-trivial, i.e. not homotopic (relative boundary) to a point. Also, a \emph{curve} on $\Sigma$ is a closed immersed curve, i.e. the image of a proper immersion $S^1\to\Sigma$,  that is essential and non-peripheral (i.e. not freely homotopic to a point or boundary component).

A subset $S=\{g_1,\ldots,g_n\}\subset\pi_1(\Sigma,p)$ is a generating set for $\pi_1(\Sigma,p)$ if any element $w\in\pi_1(\Sigma,p)$ can be written as $w=g_{n_1}^{\alpha_1} g_{n_2}^{\alpha_2}\cdots g_{n_l}^{\alpha_l}$ for some $g_{n_i}\in S$ and $\alpha_i\in\mathbb{Z}$. We say that $S$ is a \emph{simple generating set} if all elements in $S$ are simple loops that are pairwise disjoint (except for at the base point). Note that, in particular, the standard minimal generating set for a genus $g$ surface, $S=\{a_1, b_1, a_2, b_2\ldots, a_g, b_g\}$ where the product of the commutators satisfy $[a_1,b_1]\cdots[a_{g},b_g]=1$, is a simple generating set. Also, as mentioned in the introduction, any one vertex triangulation on $\Sigma$ gives a simple generating set (this time non-minimal). In particular, the number of simple generating sets, up to automorphisms of $\pi_1(\Sigma,p)$, is super exponential in the complexity of the surface (see \cite{Pen} or \cite{DP} for more details).  

We define the \emph{word length} of $w\in\pi_1(\Sigma,p)$ with respect to a generating set $S$, denoted by $\vert w\vert_S$, to be the minimum number of generators needed to write the word $w$. That is, 
$$\vert w\vert_S=\min\{\vert\alpha_1|+\cdots+|\alpha_l\vert\, : \, w'= g_{n_1}^{\alpha_1} g_{n_2}^{\alpha_2}\cdots g_{n_l}^{\alpha_l} \mbox{ and } w'\sim_p w\}.$$

Let $\lambda$ be a properly immersed curve or arc (or union of such) on $\Sigma\setminus\{p\}$ and let $|w\cap\lambda|$ denote the number of intersection points of transverse representatives of $w$ and $\lambda$. We define the (geometric) intersection number between $w$ and $\lambda$, $\iota(w,\lambda)$, to be the minimum number of intersection points between transverse representatives of $\lambda$ and $w'$ as $w'$ runs over all loops homotopic to $w$ relative $p$. That is:
$$\iota(w,\lambda)=\min\{\vert w'\cap\lambda\vert\,:\,w'\sim_p w\mbox{ and in general position}\}.$$
Note that both $\vert w\vert_S$ and $\iota(w,\lambda)$ only depend on the homotopy class of $w$.

Given two curves $\gamma, \gamma'$ in $\Sigma$ we write $\gamma\sim\gamma'$ if $\gamma$ and $\gamma'$ are (freely) homotopic, and denote the (free) homotopy class of $\gamma$ by $[\gamma]$. Each curve $\gamma$ can be identified with a (non-unique) element in $\pi_1(\Sigma,p)$ by identifying it with a curve through $p$ that is homotopic to $\gamma$ and viewing it as a loop based at $p$. Hence we can write $\gamma$ as a word in the generators in $S$. Suppose $w$ is a word representing $\gamma$. Then the curve given by conjugating $w$ by any other word $u$ gives a curve that is freely homotopic to $\gamma$ as well. In fact, conjugacy classes of elements in $\pi_1(\Sigma,p)$ correspond to homotopy classes of curves on $\Sigma$. The word length, $\ell_S(\gamma)$, of (the homotopy class of) $\gamma$ with respect to a generating set $S$ is defined as the minimum word length among all elements in the conjugacy class corresponding to $\gamma$. That is, 
$$\ell_S(\gamma)=\min\{\vert \delta\vert_S\,: \, \delta\in[\gamma]\}.$$

We define the (geometric) intersection number between $\gamma$ and another curve or arc (or union of such) $\lambda$ to be the minimum number of intersections between transverse representatives of $\lambda$ and $\gamma'$ as $\gamma'$ runs over all curves in the free homotopy class of $\gamma$, that is:  
$$\iota(\gamma,\lambda)=\min\{\vert \gamma'\cap\lambda\vert\,:\,\gamma'\sim \gamma\mbox{ and in general position}\}.$$
As above, $\ell_S(\gamma)$ and $\iota(\gamma,\lambda)$ only depend on the free homotopy class $[\gamma]$. 


\section{Proof of Theorems \ref{currentloop} and \ref{currentfree}}\label{sectionproof}

\subsection{Constructing the collection of curves and loops $\lambda_S$}\label{current}

Consider the surface $\Sigma\setminus S$ obtained by cutting $\Sigma$ along the loops corresponding to the generators. Since $S$ generates $\pi_1(\Sigma,p)$ we obtain a disjoint union of simply connected or possibly once-holed polygons. Note that all vertices are identified to a single point, namely the base point $p$, and the edges correspond to the generators.  We start by constructing arcs in each polygon, which then will be connected in such a way to create a union of arcs and curves on $\Sigma$: 

\begin{enumerate}[(i)]
\item Mark two points, away from the vertices, on each edge of each polygon\\
\item For each once-holed polygon $P$, connect each marked point with a simple arc in the interior of $P$ to the hole, such that all arcs are pairwise disjoint (see Figure \ref{puncture})\\
\item For each simply connected polygon $P$ with an even number of sides, and for each pair of opposite edges, connect the two points on one edge with two simple disjoint arcs in the interior of $P$ to the two points on the opposite edge (see Figure \ref{even})\\
\item For each simply connected polygon $P$ with an odd number of sides, connect the two points on each side with two simple arcs in the interior of $P$, disjoint from each other, to a point on each of the two edges incident to the vertex opposite this side (see Figure \ref{odd})\\
\item Glue the polygons back together according to the side pairing given by the cutting $\Sigma\setminus S$. When gluing two polygons together along a side $e$ we have to decide how to connect the two endpoints on $e$ of the arcs in one polygon to the two endpoints of the arcs in the other: 
 \begin{itemize}
 \item Case 1: If at least one of the polygons has a hole or is even sided, connect the arcs pairwise without creating a new intersection.
 \item Case 2: If both polygons are simply connected and odd sided, connected them by creating a crossing.
 \end{itemize}
See Figures \ref{cross} and \ref{glue}. 
\end{enumerate}

\begin{subfigures}
\begin{figure}[h]
\includegraphics[width=8cm, height=11.2cm]{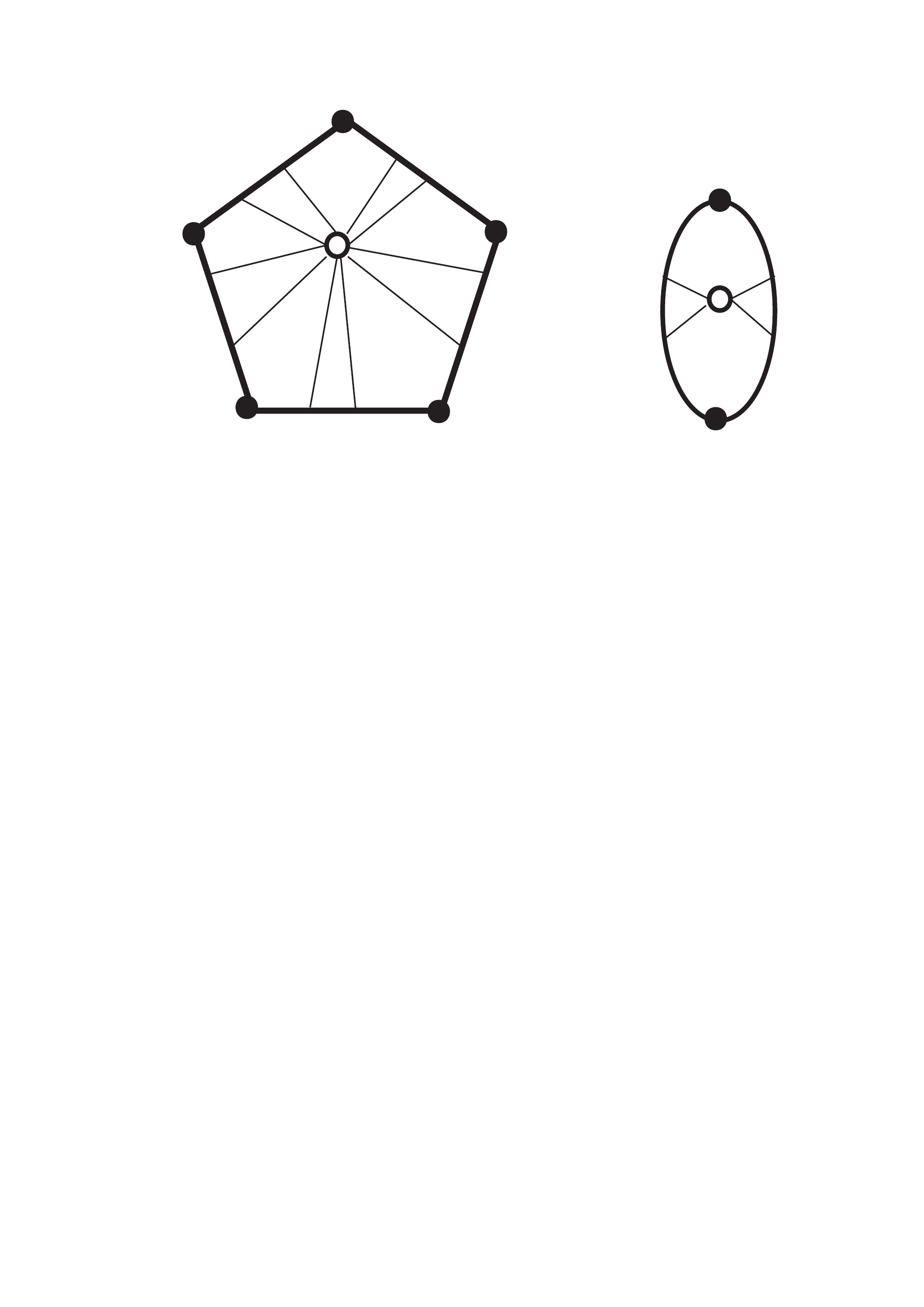}
\vspace{-7.5cm}
\caption{Arcs in one-holed polygons}
\label{puncture}
\end{figure}

\begin{figure}[h]
\hspace{2cm}
\includegraphics[width=8cm, height=11cm]{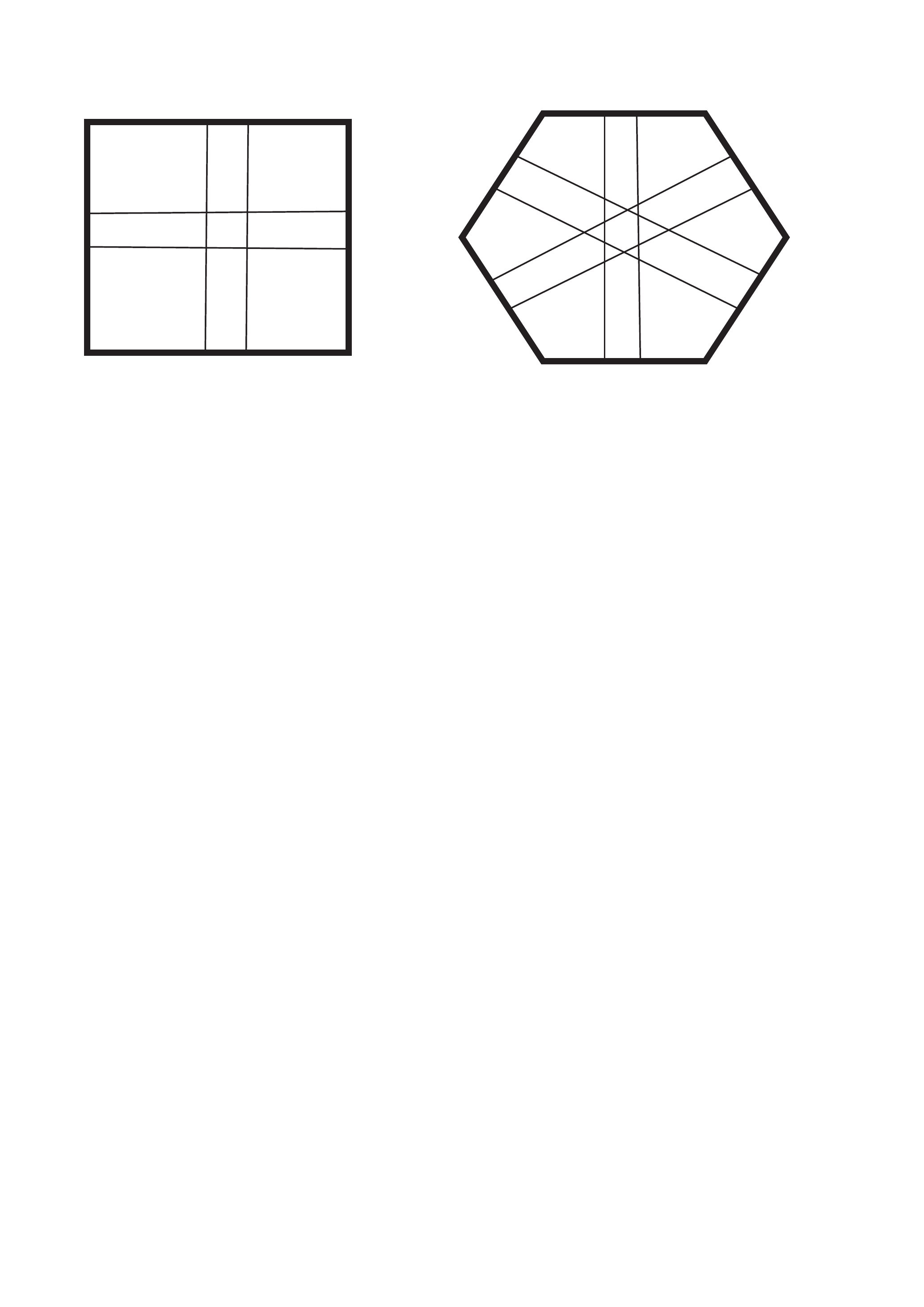}
\vspace{-7.8cm}
\caption{Arcs in simply connected even sided polygons}
\label{even}
\end{figure}

\begin{figure}[h]
\hspace{0.5cm}
\includegraphics[width=8cm, height=11cm]{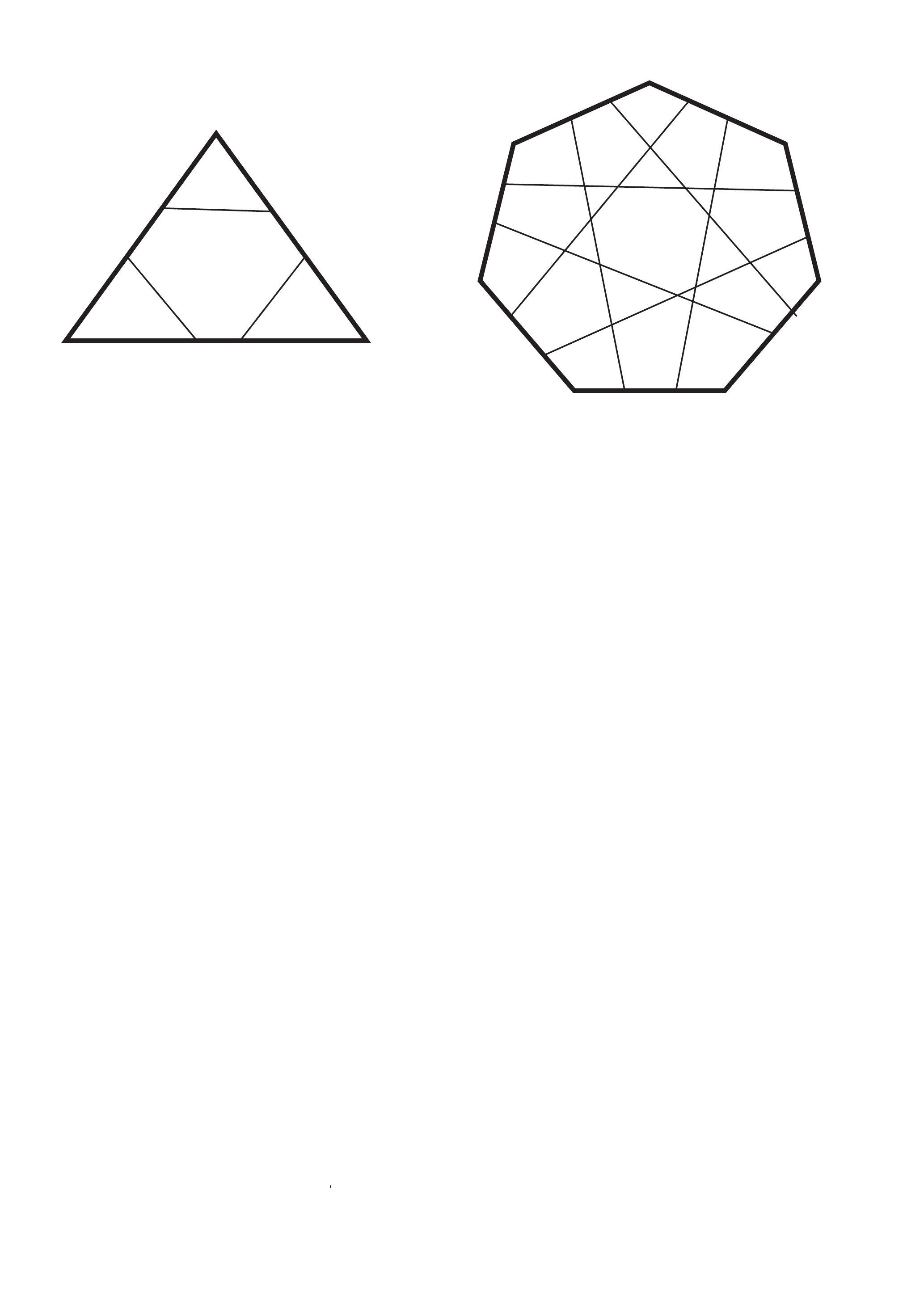}
\vspace{-7.5cm}
\caption{Arcs in simply connected odd sided polygons}
\label{odd}
\end{figure}

\begin{figure}[h]
\hspace{0.5cm}
\includegraphics[width=10cm, height=9cm]{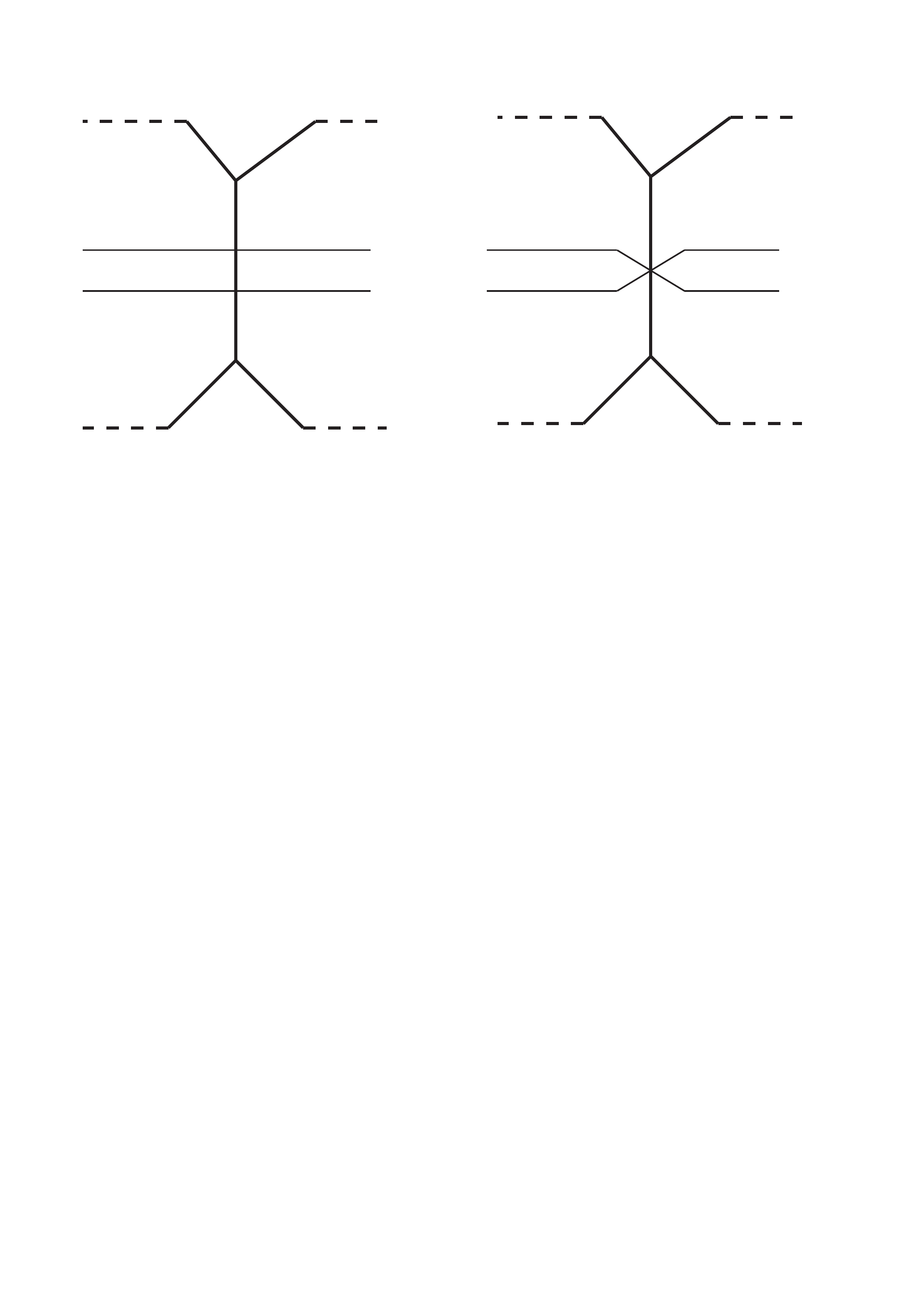}
\vspace{-5.3cm}
\caption{The two cases when gluing two polygons along a side: Case 1 on the left,  Case 2 on the right.}
\label{cross}
\end{figure}

\begin{figure}[h]
\hspace{10cm}
\includegraphics[width=12.5cm, height=8.2cm]{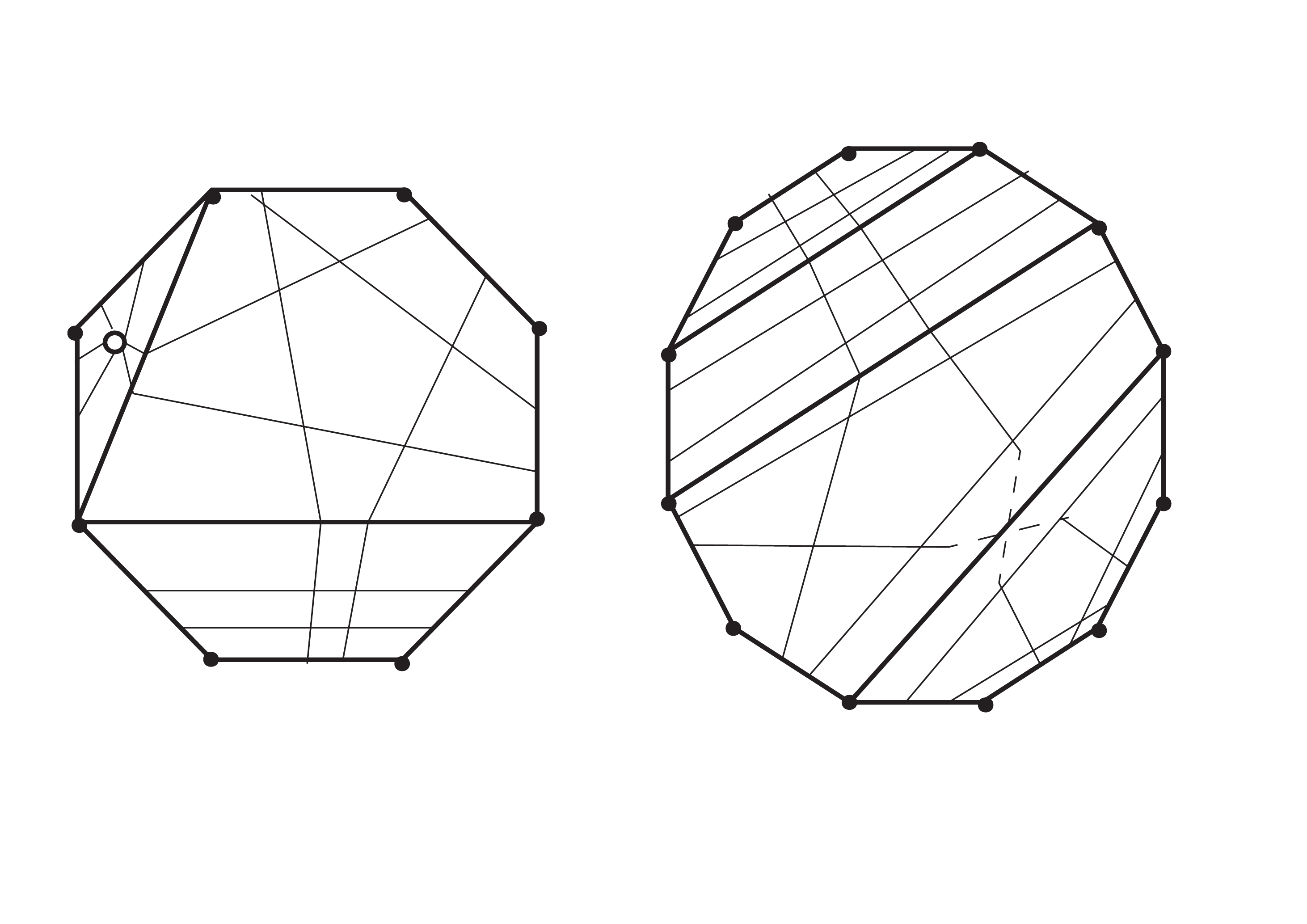}
\vspace{-1.5cm}
\caption{Connecting arcs when gluing polygons. Generators in thick lines and arcs in thin lines. Note the extra crossing in dashed lines.}
\label{glue}
\end{figure}

\end{subfigures}

\noindent Denote the resulting union of arcs and curves by $\lambda'_S$. Note that, by construction, $\lambda'_S$ intersects each generator exactly twice. In order to get a nicer representation, we multiply each curve and arc in $\lambda'_S$ by the coefficient 1/2 to obtain $\lambda_S$. We refer to $\lambda_S$ as the collection of (weighted) curves and arcs \emph{associated to} the simple generating set $S$. By the construction of $\lambda_S$ the following lemma is immediate: 

\begin{lemma}\label{atleast}
Suppose $S$ is a simple generating set for $\pi_1(\Sigma,p)$ and $\lambda_S$ the associated collection of curves and arcs. If $w=g_{n_1}^{\alpha_1} g_{n_2}^{\alpha_2}\cdots g_{n_l}^{\alpha_l}\in\pi_1(\Sigma,p)$ where $g_{n_i}\in S$ and $\alpha_i\in\mathbb{Z}$, then 
$$\vert\lambda_S\cap w\vert=\vert\alpha_1|+\cdots+\vert\alpha_n\vert.$$ 
In particular, $\iota(w,\lambda_S)\leq \vert w\vert_S$ for all $w\in\pi_1(\Sigma,p)$.\qed
\end{lemma}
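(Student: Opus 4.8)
The statement to prove is Lemma~\ref{atleast}. The plan is to unwind the construction of $\lambda_S$ and check that a single generator is crossed exactly twice by $\lambda'_S$, equivalently exactly once (with weight $1/2$, i.e.\ counted with multiplicity $1$) by $\lambda_S$, and that no cancellation can occur when we concatenate the generators making up $w$.

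First I would fix the representative of $w$ given by the reduced expression $w=g_{n_1}^{\alpha_1}\cdots g_{n_l}^{\alpha_l}$: realize it as the loop that starts at $p$, traverses the edge $g_{n_1}$ a total of $|\alpha_1|$ times (in the direction dictated by the sign of $\alpha_1$), then the edge $g_{n_2}$ a total of $|\alpha_2|$ times, and so on, each time returning to $p$ before moving to the next generator. Because the generators in $S$ are simple and pairwise disjoint except at $p$, this loop stays in a neighborhood of the $1$-skeleton $\bigcup S$ and meets $\lambda_S$ only near the points where $\lambda'_S$ crosses the edges of $\bigcup S$. By construction, each edge (each generator) carries exactly two marked points of $\lambda'_S$, so a single traversal of an edge contributes exactly two intersection points with $\lambda'_S$, hence exactly one with $\lambda_S=\tfrac12\lambda'_S$. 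Summing over all traversals gives $|\lambda_S\cap w|=|\alpha_1|+\cdots+|\alpha_l|$ for this particular representative; I would then note that one must be slightly careful near $p$ (the loop passes through $p$ repeatedly), but $\lambda_S$ is disjoint from $p$, so all crossings are accounted for by the edge-crossings and none are lost or created at $p$.

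To get the displayed equality $|\lambda_S\cap w|=|\alpha_1|+\cdots+|\alpha_n|$ as stated, I would observe that the count above is independent of the chosen reduced word for $w$ in the sense needed: the lemma asserts the equality for \emph{this} given expression of $w$, not a minimum, so no optimization is required here — we simply exhibit the representative above and count. For the final clause, since $\iota(w,\lambda_S)$ is the minimum of $|w'\cap\lambda_S|$ over all $w'\sim_p w$, and we have produced one representative with $|\alpha_1|+\cdots+|\alpha_l|$ crossings, we get $\iota(w,\lambda_S)\le|\alpha_1|+\cdots+|\alpha_l|$; taking the infimum over all expressions of $w$ as a word in $S$ yields $\iota(w,\lambda_S)\le|w|_S$.

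The main (though minor) obstacle is bookkeeping at the base point: the natural piecewise representative of $w$ is not an immersion there, so to speak of transverse representatives one should perturb it slightly near $p$. I would argue that since $\lambda_S$ avoids $p$ entirely, any sufficiently small perturbation supported near $p$ changes neither the number of intersection points with $\lambda_S$ nor the homotopy class rel $p$, so the count is unaffected; this is exactly the kind of routine verification the phrase ``by the construction of $\lambda_S$ the following lemma is immediate'' is meant to cover.
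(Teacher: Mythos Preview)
Your argument is correct and is exactly the unwinding the paper has in mind: the paper simply notes that by construction $\lambda'_S$ meets each generator in two points (so $\lambda_S$ contributes weight $1$ per edge-traversal) and declares the lemma immediate. Your care about perturbing near $p$ is a reasonable extra remark, but the approach is the same.
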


\subsection{The proof}\label{proofs} 

To prove Theorems \ref{currentloop} and \ref{currentfree} we need to show that the word length of (a homotopy class of) a loop or curve with respect to a simple generating set $S$ is given by its intersection number with $\lambda_S$. The main technical step is showing that, when considering a word either as a loop or a curve in $\Sigma$, the presence of bigons with $\lambda_S$ is equivalent to being able to reduce the number of generators in the word. To make this precise, let us say that a word $w$ in $S$ is \emph{shortest} if its length (i.e. the number of generators appearing in $w$) is not longer than the length of any other word representing the same element as $w$. Equivalently, a word is shortest if its length realizes the word length of $w$. Accordingly, we say a word is \emph{cyclically shortest} if it is not longer than any other word representing the same conjugacy class. Finally, we need to define a bigon: First consider elements $w\in\pi_1(\Sigma,p)$ as loops. That is, a path $w: [0,1]\to\Sigma$ such that $w(0)=w(1)=p$. We say $w$ and an arc or curve $\alpha\subset\Sigma\setminus{p}$ form a bigon if there exist an interval $I\subset(0,1)$ and an arc $a\subset\alpha$ such that $w(I)$ and $a$ have the same endpoints and when concatenated they form a homotopically trivial curve. When we consider $w$ as (a free homotopy class of) a closed curve $w: \mathbb{S}^1\to\Sigma$ we modify the definition by allowing $I\subset\mathbb{S}^1$. Bigons will be important to us because if we have a curve (or arc) $\alpha$ and a collection of curves and arcs $\lambda$, then $\vert\alpha\cap\lambda\vert=\iota(\alpha,\lambda)$ if and only if $\alpha$ and $\lambda$ form no bigons.  

\begin{prop}\label{bigonprop}
Let $S$ be a simple generating set for $\pi_1(\Sigma,p)$ and $\lambda_S$ the associated collection of curves and arcs. Then:
\begin{enumerate}[(i)]
\item $w\in\pi_1(\Sigma,p)$ is shortest if and only if $w$, when considered as a loop, does not form a bigon with $\lambda_S$. \\
\item $w\in\pi_1(\Sigma,p)$ is cyclically shortest if and only if $w$, when considered as a curve, does not form a bigon with $\lambda_S$.
\end{enumerate}
\end{prop}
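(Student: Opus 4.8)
\textbf{Proof proposal for Proposition \ref{bigonprop}.}

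The plan is to prove the two ``only if'' directions (which are the substantive content) by showing that a bigon between $w$ and $\lambda_S$ can always be used to shorten (resp.\ cyclically shorten) the word, and to obtain the ``if'' directions as easy consequences of Lemma \ref{atleast} together with the characterization of $\iota$ via absence of bigons. For the ``if'' direction of (i): if $w$ forms no bigon with $\lambda_S$, then $\vert w\cap\lambda_S\vert=\iota(w,\lambda_S)$ for the chosen representative; choosing $w$ to be a reduced-spelling representative realizing $\vert w\vert_S$ we get from Lemma \ref{atleast} that $\vert w\vert_S=\vert w\cap\lambda_S\vert=\iota(w,\lambda_S)\le\vert w'\vert_S$ for every representative $w'$, so $w$ is shortest; (ii) is the same argument with conjugacy classes and $\ell_S$.

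For the ``only if'' direction, suppose $w$ (as a loop) forms a bigon with $\lambda_S$. First I would put $w$ in ``taut position'': realize $w$ by a loop that crosses each generator of $S$ transversally and minimally, so that the spelling of $w$ in $S$ that one reads off by recording, in order, the generator edges crossed and the direction of crossing is a reduced word of length $\vert w\cap\lambda_S'\vert/?$ — more precisely, the combinatorial length of this spelling equals $\sum|\alpha_i|$ for the corresponding product and also equals the number of times $w$ crosses the edges of the polygonal decomposition $\Sigma\setminus S$. The key point is that, by the explicit construction of $\lambda_S$ in Section \ref{current}, each arc of $\lambda_S$ sits inside one polygon $P$ of $\Sigma\setminus S$ and joins two marked points; so an innermost bigon between $w$ and $\lambda_S$ — one bounding an embedded disk $D$ meeting $\lambda_S$ in a single arc $a$ — forces $w\cap D$ to be a single subarc $w(I)$ crossing exactly one arc $a$ of $\lambda_S$. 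Tracing $a$ back through the polygon(s) it lies in, I would argue that the two $\lambda_S$-crossings at the ends of the bigon occur either (a) on the same generator edge $e$, in which case $w(I)$ enters and leaves $P$ through $e$ and the disk $D$ shows that the subword ``$g_e g_e^{-1}$'' (or the $e$-crossing is spurious relative to a neighboring one) appears and can be canceled, reducing the length; or (b) on two distinct generator edges $e,e'$ of the same polygon that, by the design in steps (iii)--(iv) (opposite edges in even polygons, or the two edges at the opposite vertex in odd polygons) are precisely the edge-pairs joined by an arc of $\lambda_S$, in which case the disk $D$ exhibits a homotopy of $w$ that removes the two crossings with $e$ and $e'$ simultaneously; since $\lambda_S$ meets every generator exactly twice (before the $1/2$ weighting), removing two crossings with $\lambda_S$ removes two generator-crossings, again shortening $w$. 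In Case 2 of step (v), the intentionally-introduced crossing between arcs of two adjacent odd polygons has to be handled: I would check that such a crossing never creates a bigon with $w$ unless $w$ itself backtracks across the shared edge, i.e.\ the crossing is ``faithful'' and only witnesses genuine reductions.

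For (ii) the argument is identical once one reads the word cyclically: an innermost bigon with $I\subset\mathbb S^1$ that wraps around the basepoint corresponds to a cyclic cancellation (conjugation followed by a reduction), and the same polygon-by-polygon analysis applies; the only extra care is when the bigon's disk $D$ contains $p$, which corresponds exactly to a cyclic reduction at the ``seam'' of the cyclic word. I expect the main obstacle to be the bookkeeping in Case 2 (adjacent odd-sided polygons, the deliberate crossing in step (v)): one must verify that this crossing is placed so that it is never one of the two sides of a bigon with a taut $w$ — equivalently, that the local picture near the shared edge of two odd polygons, with its extra crossing, still has the property that $w$ forms a bigon with $\lambda_S$ there if and only if $w$ can be homotoped to cross that shared edge fewer times. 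Establishing this local statement carefully (probably by exhausting the finitely many combinatorial types of how $w$ can cross the shared edge and the four $\lambda_S$-arcs near it) is the technical heart of the proof; everything else reduces to the standard bigon-removal/innermost-disk technique.
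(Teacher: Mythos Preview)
Your ``if'' direction is essentially the paper's argument, though your phrasing is a bit tangled: you cannot simultaneously assume that $w$ forms no bigon \emph{and} choose $w$ to realize $\vert w\vert_S$; rather, if the given representative $w$ forms no bigon then $\vert w\cap\lambda_S\vert=\iota(w,\lambda_S)$, and Lemma \ref{atleast} applied to \emph{any} other representative $w'$ gives $\vert w'\cap\lambda_S\vert\ge\iota(w',\lambda_S)=\iota(w,\lambda_S)=\vert w\cap\lambda_S\vert$, whence $w$ is shortest. This is exactly what the paper does.

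The ``only if'' direction, however, has a genuine gap. Your key claim---that ``each arc of $\lambda_S$ sits inside one polygon $P$ of $\Sigma\setminus S$''---is simply false. Step (v) of the construction glues the polygon-arcs together along the edges of $S$, so a single leaf of $\lambda_S$ can (and typically does) thread through arbitrarily many polygons. Consequently your case split, in which the two endpoints of the $\lambda_S$-side $a$ of an innermost bigon lie either (a) on the same generator edge or (b) on two edges of the \emph{same} polygon, is not exhaustive: the arc $a$ may begin and end in polygons that are far apart, and the subpath $w(I)$ bounding the bigon with $a$ may traverse many edges of many polygons. Removing such a bigon is not a local cancellation of the form $g_eg_e^{-1}$ or a single polygon-relation; one must show that $w(I)$ can be replaced by a strictly shorter edge-path with the same endpoints.

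This is precisely the content the paper supplies and that your proposal is missing. The paper lifts to the universal cover, lets $\mathcal{P}_0$ be the (possibly long) chain of polygons that $\tilde a$ meets, and then runs a Dehn-type argument: Lemma \ref{Dehn} shows that any excursion of $\tilde p$ outside $\mathcal{P}_0$ already traverses at least half the boundary of some polygon (so can be shortened), and Lemma \ref{path} shows that once $\tilde p\subset\mathcal{P}_0$, forming a bigon with $\tilde a$ forces $\tilde p$ to traverse more than half of the \emph{exterior} edges of $\mathcal{P}_0$---hence can again be shortened. The deliberate crossing in Case 2 of step (v) is not a local nuisance to be ``checked by exhausting combinatorial types''; it is the global mechanism that makes Lemma \ref{base} (vertices of $\mathcal{P}_0$ have valence $\le4$) and the parity bookkeeping in Lemma \ref{path} go through. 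Your proposal never produces the shorter edge-path that these lemmas do, and the local picture you describe does not suffice when $a$ spans several polygons.
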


Assuming Proposition \ref{bigonprop} for now, we will prove Theorems \ref{currentloop} and \ref{currentfree}. 

\begin{proof}[Proof of Theorems \ref{currentloop} and \ref{currentfree}]
Let $S$ be a simple generating set for $\pi_1(\Sigma,p)$ and $\lambda_S$ the associated collection of curves and arcs. We start with considering Theorem \ref{currentloop}. Let $w\in\pi_1(\Sigma,p)$ and $w'$ a shortest word with respect to $S$ which defines the same element as $w$, i.e. $w'\sim_p w$. Then, by definition, $\vert w\vert_S=\vert w'\vert_S$ and $\iota(\lambda_S,w)=\iota(\lambda_S,w')$. By Proposition \ref{bigonprop} $w'$ forms no bigons with $\lambda_S$. Hence $\vert\lambda_S\cap w'\vert=\iota(\lambda_S,w')$. But by Lemma \ref{atleast}, since $w'$ is shortest,  $\vert\lambda_S\cap w'\vert=\vert w'\vert_S$. Therefore we have $\iota(\lambda_S,w)=\iota(\lambda_S,w')=\vert\lambda_S\cap w'\vert=\vert w'\vert_S=\vert w\vert_S$, as desired. 

Theorem \ref{currentfree} is proved similarly.
\end{proof}

We point out the following fact that also results from Proposition \ref{bigonprop}:

\begin{cor}
Let $S$ be a simple generating set for $\pi_1(\Sigma,p)$ and $\lambda_S$ the associated collection of curves and arcs. Then each curve and arc in $\lambda_S$ is essential. Moreover, $\iota(g,\lambda_S)=1$ for all $g\in S$. \qed
\end{cor}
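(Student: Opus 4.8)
The plan is to derive both assertions from Proposition \ref{bigonprop} together with Lemma \ref{atleast} and the basic relationship between bigons and intersection number recalled above.

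First I would prove that $\iota(g,\lambda_S)=1$ for each generator $g\in S$. By construction $\lambda_S$ meets each edge of the cut-open surface $\Sigma\setminus S$ exactly twice, and since each arc and curve carries the weight $1/2$, we have $|g\cap\lambda_S|=\tfrac12\cdot 2=1$ for the obvious loop representing $g$; equivalently, viewing $g$ as the word of length one in $S$, Lemma \ref{atleast} gives $|\lambda_S\cap g|=1$, whence $\iota(g,\lambda_S)\le 1$. For the reverse inequality, note that the one-letter word $g$ is shortest (a generator cannot be written with zero letters, as it is nontrivial), so by part (i) of Proposition \ref{bigonprop} the loop $g$ forms no bigon with $\lambda_S$; hence $|\lambda_S\cap g|=\iota(\lambda_S,g)$, and therefore $\iota(g,\lambda_S)=1$. (One could equally invoke part (ii), since $g$ is also cyclically shortest, to run the same argument with the free homotopy class.)

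Next I would deduce that every component of $\lambda_S$ is essential. Suppose some curve or arc $\alpha$ in $\lambda_S$ is inessential, i.e. homotopic (freely, or rel boundary for an arc) into a point or a boundary component. Then $\alpha$ can be homotoped off a small neighbourhood of every edge of $\Sigma\setminus S$, so in particular there is a loop representing some generator $g\in S$ disjoint from $\alpha$; this does not immediately contradict $\iota(g,\lambda_S)=1$, since the other components of $\lambda_S$ still meet $g$. The cleaner route is to use the bigon criterion directly: if $\alpha$ bounds a disk (or cobounds an annulus with $\partial\Sigma$), then $\alpha$ itself has a self-bigon or a bigon with the rest of $\lambda_S$ after a small homotopy, forcing $\iota(g,\lambda_S)<|g\cap\lambda_S|=1$ for some generator $g$ whose chosen representative crosses $\alpha$ — contradiction. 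Thus I would single out, for each component $\alpha$, a generator $g$ that meets it and argue: an inessential $\alpha$ would produce a bigon between the shortest one-letter loop $g$ and $\lambda_S$, contradicting Proposition \ref{bigonprop}(i).

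The main obstacle is making the essentiality argument airtight, because "essential" is a statement about $\lambda_S$ as a subset of $\Sigma$ (or $\Sigma\setminus\{p\}$), whereas the bigon criterion in Proposition \ref{bigonprop} is phrased in terms of the loop $w$ and $\lambda_S$ jointly. The fix is to observe that if a component $\alpha\subset\lambda_S$ were inessential, it could be isotoped across a face of $\Sigma\setminus S$ so as to remove both of its intersection points with the corresponding edge $g$, contradicting the minimality $|\lambda_S\cap g|=\iota(g,\lambda_S)=1$ established in the first step. Carrying this out requires only a local picture in a single polygon of $\Sigma\setminus S$, using the explicit construction in (i)--(v) of Section \ref{current}, so it is routine once set up correctly.
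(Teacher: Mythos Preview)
Your proposal is correct and follows exactly the route the paper intends: the corollary is stated with a bare \qed, the preceding line saying only that it ``also results from Proposition~\ref{bigonprop},'' and you derive both claims from Proposition~\ref{bigonprop} together with Lemma~\ref{atleast}, which is precisely the implied argument. Your essentiality paragraph wanders through a couple of false starts before landing on the right formulation---an inessential component $\alpha$ meeting a generator $g$ would force a bigon between the one-letter loop $g$ and $\lambda_S$, contradicting Proposition~\ref{bigonprop}(i)---but that final version is the correct one and nothing more is needed.
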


It remains to prove Proposition \ref{bigonprop} which we will do below. 

\subsection{Looking for bigons}\label{bigon} 

The results of this section hold for any simple generating set for any surface $\Sigma$ satisfying $3g+r>3$. However, first we will assume that $\Sigma$ is closed and return to the general case at the end of the section. 

Consider tiling the universal cover $\tilde{\Sigma}$ of $\Sigma$ by the lifts of the polygons in $\Sigma\setminus S$. Call this tiling $T$. Note that the 1-skeleton of $T$ corresponds to the lifts of $S$, which we denote $\tilde S$, and every vertex of $T$ is a lift of the base point $p$. Let $\tilde\lambda_S$ be the lift of $\lambda_S$ to $\tilde{\Sigma}$. Note that if $\mathcal{P}$ is a union of polygons in $T$, every edge in $P$ is adjacent to at most two polygons. We call the edges adjacent to only one polygon the \emph{exterior edges} of $\mathcal{P}$ and the rest the \emph{interior edges}. By a \emph{path $\tilde p$ in $\tilde S$} we mean a path that traverses edges of the graph $\tilde S$ that is either infinite or starts and ends at vertices, and the length of such a path is defined as the number of edges it traverses. Note that each path corresponds to a word in $S$ and the length of the path corresponds to the length of the word. The purpose of this section is to show that if a path $\tilde p$ in $\tilde S$ forms a bigon with $\tilde\lambda_S$, then there is a subpath of $\tilde p$ that can be replaced by a shorter path, and hence the corresponding word is not shortest. 

We will prove the above fact using a variation of Dehn's algorithm (see \cite{Deh} or \cite{Sti}), used to solve the word problem. Recall the proof of Dehn's algorithm in, for example, \cite[Chapter 6]{Sti}: We tile $\tilde\Sigma$ by the fundamental domain of $\Sigma$ and consider a path $\tilde p$ in the 1-skeleton of $T$, corresponding to a word $w$. If $\tilde p$ is a closed loop, then there must be a tile $D$ in $T$ of which $\tilde p$ traverses more than half its edges, in succession. Call this subpath $\tilde p'$. Then $\tilde p$ can be replaced with a shorter loop defining a word homotopic to $w$ by replacing the subpath $\tilde p'$ with a path traversing the complementary edges in $D$. This way, whenever we have a loop (i.e. a word representing the trivial word), we can chip away the tiles in $T$ until we are left with a trivial loop.  

Returning now to our setting with a tiling $T$ of $\tilde\Sigma$ given by $\Sigma\setminus S$, suppose there is a path in $\tilde S$ that forms a bigon with $\tilde\lambda_S$. Say the two sides of the bigon are $\tilde p$ and $\tilde a$ for some path $\tilde p$ in $\tilde S$ and some arc $\tilde a$ of a leaf $\tilde\alpha\subset\tilde\lambda_S$. We define a sequence of nested regions $\mathcal{P}_0\subset \mathcal{P}_1\subset \mathcal{P}_2\subset\cdots$ inductively. Let $\mathcal{P}_0$ be the union of the polygons in $T$ that $\tilde a$ intersects, and let $\mathcal{P}_{i}$ be the union of all polygons that have at least one vertex in $\mathcal{P}_{i-1}$. Moreover, let $\mathcal{B}_i$ denote the boundary of $\mathcal{P}_i$ for each $i\geq0$, set $\mathcal{A}_0=\mathcal{P}_0$, and define $\mathcal{A}_i=\mathcal{P}_i\setminus\mathcal{P}_{i-1}$ for each $i\geq1$. 

Suppose the path $\tilde p$ intersects $\mathcal{B}_k$ but does not intersect $\mathcal{B}_l$ for any $l>k$. We will show that if $k\neq 0$, then $\tilde p$ must have a subpath $\tilde p'$ that traverses at least half the edges, in succession, of a polygon (or possibly the union of two adjacent polygons) $P$ in $\mathcal{A}_k$ (see Lemma \ref{Dehn} below). Replacing the subpath $\tilde p'$ with the path $\tilde p''$ that traverses the complementary edges in $P$ gives a path of at most the same length. Note that the bigon the new path bounds with $\tilde a$ no longer contains $P$ in its interior. Repeating this argument one can assume that a path $\tilde p$ corresponding to a shortest word lies in $\mathcal{P}_0$.  Finally, we will show that (Lemma \ref{path}) if a path $\tilde p$ in $\mathcal{P}_0$ forms a bigon with $\tilde a$, then $\tilde p$ traverses more than half of the exterior edges, in succession, of $\mathcal{P}_0$. Again, this implies that $\tilde p$ could not have corresponded to a shortest word. 

Before proving the mentioned lemmas, we need the following: 

\begin{lemma}\label{base}
Every vertex in $\mathcal{P}_0$ has valence at most 4 in $\mathcal{P}_0$ (i.e. each vertex has at most 4 incident edges that lie in $\mathcal{P}_0$). 
\end{lemma}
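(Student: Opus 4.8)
\textbf{Proof proposal for Lemma \ref{base}.}

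The plan is to understand the region $\mathcal{P}_0$ well enough to bound vertex valences, and this requires unwinding the construction of $\lambda_S$ together with the definition of $\mathcal{P}_0$ as the union of polygons of $T$ met by the single arc $\tilde a$ of a leaf of $\tilde\lambda_S$. First I would recall that, by construction, $\tilde\lambda_S$ enters and leaves each polygon of $T$ through the two marked points on its edges, and inside each polygon it runs along the model arcs of steps (ii)--(iv): in a $2m$-gon the arcs pair up opposite edges, in a $(2m+1)$-gon they connect an edge to the two edges adjacent to the opposite vertex, and in a once-holed polygon they run to the hole. The key structural point is that $\tilde a$, being a single arc of a single leaf, is an embedded arc, so the polygons it passes through form a "strip": consecutive polygons in $\mathcal{P}_0$ are glued along a shared edge, and $\tilde a$ crosses from one to the next exactly once through that shared edge. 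Thus $\mathcal{P}_0$ is (topologically) a chain of polygons $P^{(1)}, \dots, P^{(N)}$ with $P^{(j)}$ and $P^{(j+1)}$ sharing a single edge $e_j$, the interior edges of $\mathcal{P}_0$ being precisely the $e_j$.

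Given this picture, I would compute the valence of a vertex $v$ of $\mathcal{P}_0$ by counting incident edges that lie in $\mathcal{P}_0$. An edge incident to $v$ lies in $\mathcal{P}_0$ iff it is an edge of some $P^{(j)}$ in the chain. Since the chain is obtained by successively gluing polygons along edges $e_j$ of $\tilde S$, and $v$ is a vertex of $T$ (hence a lift of $p$, with potentially large valence in all of $T$), the edges of $\mathcal{P}_0$ at $v$ are exactly those contributed by the polygons of the chain containing $v$ in their boundary. The decisive observation is that $\tilde a$ is an embedded arc that, inside each $P^{(j)}$, follows one of the disjoint model arcs, and these model arcs are chosen so that in each polygon the arc "cuts off" a vertex-free region on one side — more precisely, in the even and holed cases the arcs separate opposite sides and in the odd case they stay away from one vertex. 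Consequently each $P^{(j)}$ contributes at most two edges at $v$ that belong to $\mathcal{P}_0$ on the relevant side, and a more careful local analysis (distinguishing whether $v$ is an endpoint vertex of $e_{j-1}$ or $e_j$, whether it lies on the "cut off" side, and whether two consecutive polygons of the chain both meet $v$) shows the total is at most $4$. Concretely I would argue that at most two polygons of the chain can contribute edges of $\mathcal{P}_0$ incident to any fixed $v$ "from each side of the strip", and each such polygon contributes at most one new edge at $v$ beyond the shared edges $e_{j}$, yielding valence $\le 4$.

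The main obstacle I expect is the bookkeeping at a vertex $v$ that is hit by the strip $\mathcal{P}_0$ on "both sides," i.e. when the chain of polygons wraps partway around $v$; there one must rule out that three or more polygons of the chain are simultaneously incident to $v$, which is where the embeddedness of $\tilde a$ and the specific geometry of the model arcs (that they separate the two marked points on a given edge from at least one vertex) really get used — if $\tilde a$ went around $v$ it would have to cross itself or re-enter a polygon it already left. I would make this precise by noting that the subarc of $\tilde a$ inside $\bigcup_j P^{(j)}$ together with an arc near $v$ would bound a disk in $\tilde\Sigma$ forcing a self-intersection of $\tilde a$, contradicting that it is an arc of a leaf. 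Once that degenerate wrap-around is excluded, the valence bound $4$ follows from the local picture: $v$ sees the shared edge(s) $e_j$ on which it lies (at most two, one on each side), plus at most one further boundary edge from each of the at-most-two polygons of the chain that are incident to it, and summing gives $\le 4$.
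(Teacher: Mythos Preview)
Your overall picture of $\mathcal{P}_0$ as a linear chain of polygons traversed by the arc $\tilde a$ is correct and matches the paper, but the mechanism you propose for bounding how many polygons of that chain can meet a fixed vertex $v$ is not the right one, and as written it does not work.

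The essential gap is that you never invoke step~(v), Case~2 of the construction of $\lambda_S$---the rule that when two \emph{odd}-sided simply connected polygons are glued along an edge, the arc-endpoints are joined \emph{with} a crossing. This rule is exactly what the paper's proof hinges on. The argument there first observes that if two edges of $\mathcal{P}_0$ incident to $v$ are both interior (hence both crossed by $\tilde a$) and are adjacent around $v$, then the polygon of $\mathcal{P}_0$ between them must be a \emph{triangle}: among all the model arcs in (ii)--(iv), only in a triangle does an arc join two adjacent sides. It then shows that $\tilde a$ cannot run through two such triangles in succession: if $\Delta_1,\Delta_2$ share the edge $e$ at $v$ and $c_i$ denotes the other edge of $\Delta_i$ at $v$, then the sub-arc in $\Delta_1$ from $c_1$ to $e$ is, because of the Case~2 crossing on $e$, matched across $e$ to the sub-arc in $\Delta_2$ going to the side \emph{opposite} $v$, not to $c_2$. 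Hence at most one triangle at $v$ can lie in $\mathcal{P}_0$, and the bound $k\le 4$ follows.

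Your proposed substitute---that embeddedness of $\tilde a$ would force a self-intersection if the chain wound too far around $v$---does not do this job. If the crossing rule were absent, a single leaf could pass through an arbitrarily long fan of triangles at $v$, entering and leaving each one through its two edges at $v$, tracing a \emph{simple} arc near $v$; nothing about embeddedness or ``re-entering a polygon it already left'' obstructs this before the fan closes all the way around $v$, and that only yields a bound of order $2|S|$, not $4$. Correspondingly, your final accounting (``at most two polygons from each side, each contributing one new edge'') is unsupported: you have not actually excluded three or more consecutive polygons of the chain being incident to $v$. The missing ingredients are precisely the ``adjacent interior edges force a triangle'' observation and the Case~2 crossing rule.
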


\begin{proof} 
Let $v$ be a vertex of  $\mathcal{P}$ that is $k$-valent in  $\mathcal{P}$, i.e. it has $k$ edges incident to it that lie in  $\mathcal{P}$. Note that $\tilde a$ intersects each interior edge of $\mathcal{P}_0$ and $v$ has at most two exterior edges incident to it. Suppose $k>3$. Then $v$ has at least two incident interior edges. Since $\tilde a$ intersects every interior edge, any two such adjacent edges (i.e. edges that bound the same polygon) must be edges of a triangle, due to the construction of $\lambda_S$. Hence, if $k=3+l$ there are $l$ triangles incident to $v$. Suppose $l>2$. Then there are two triangles $\Delta_1$ and $\Delta_2$ that share an edge $e$ and $e$ is adjacent to $v$. For $i=1,2$, let $c_i$ and $d_i$ be the two other edges of $\Delta_i$, where $c_i$ is also an interior edge in $\mathcal{P}$ adjacent to $v$. Hence $\tilde a$ is built from an arc in $\tilde\lambda_S$ with one endpoint on $c_1$ and one on $e$ and another arc with one endpoint on $c_2$ and one on $e$. But when the two triangles are glued together along side $e$ these two arcs do not get glued together due to the rules for constructing $\lambda_S$ (see Figure \ref{triangle}), a contradiction. Hence $l\leq 1$ and $k\leq 4$. 
\end{proof}
\vspace{-1cm}
\begin{figure}[h]
\hspace{2cm}
\includegraphics[width=10cm, height=15cm]{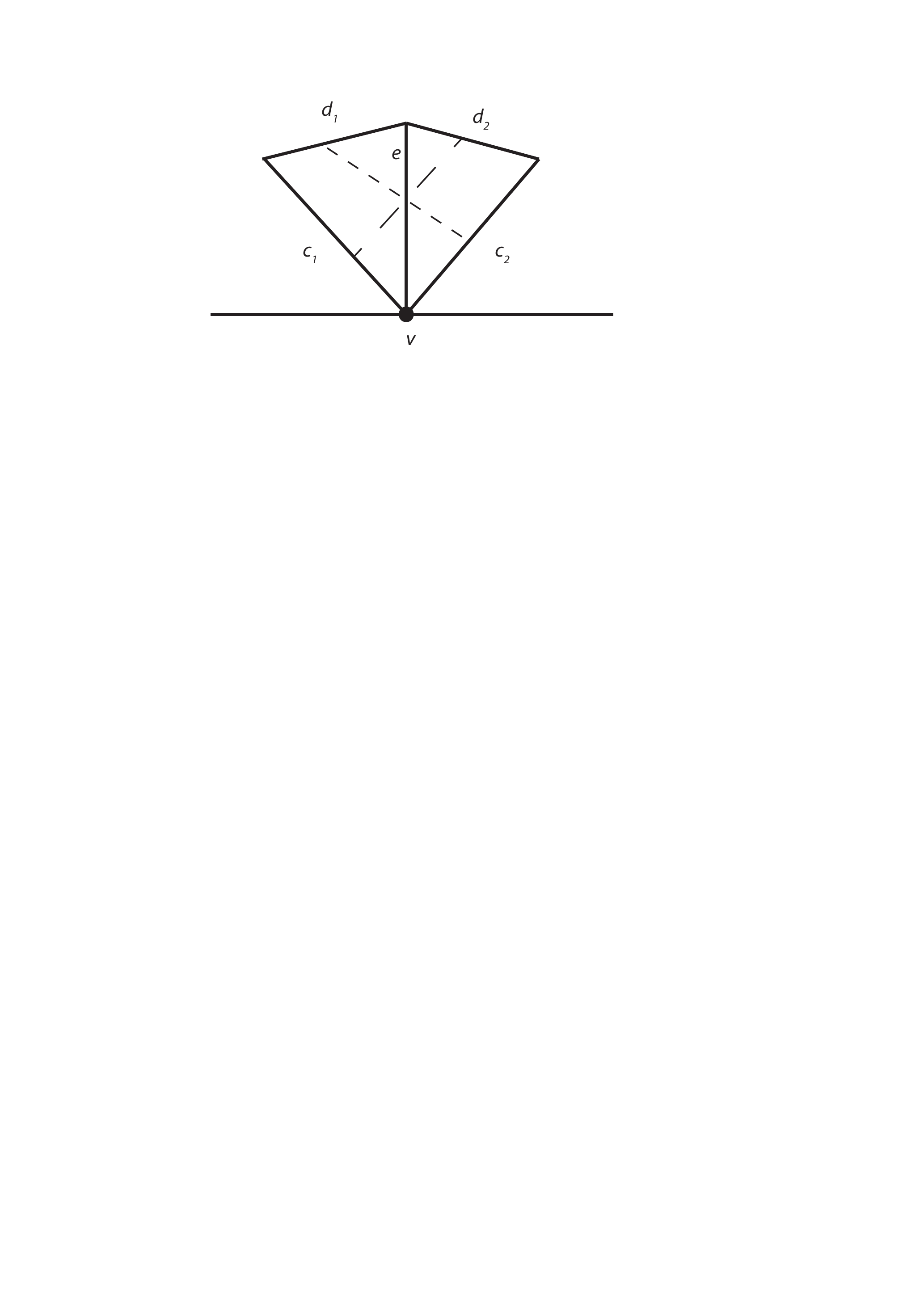}
\vspace{-11cm}
\caption{A vertex of valence 5. The arcs of $\tilde\lambda_S$ that cross the common side of the two triangles drawn in dashed lines.}
\label{triangle}
\end{figure}


\begin{lemma}\label{Dehn}
Suppose $\tilde p$ and $\tilde a$ form a bigon in $\tilde\Sigma$ where $\tilde p$ is a path in $\tilde S$ and $\tilde a$ is an arc of a leaf $\tilde\alpha\subset\lambda_S$. Suppose there exists $k>0$ such that $\tilde p$ intersects $\mathcal{B}_k$ but not $\mathcal{B}_l$ for any $l>k$. Then $\tilde p$ traverses at least half the (exterior) edges, in succession, of a polygon or a union of two adjacent polygons $P$ in $\mathcal{A}_k$. Moreover, none of the complementary edges of $P$ lie on $B_k$. 
\end{lemma}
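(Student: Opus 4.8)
The plan is to run the Dehn-type argument on the nested regions $\mathcal{P}_i$, pushing the intersection of $\tilde p$ with the laminations outward step by step. First I would fix $k>0$ so that $\tilde p$ meets $\mathcal{B}_k$ but not $\mathcal{B}_l$ for $l>k$, and observe that since $\tilde p$ and $\tilde a$ bound a bigon, the path $\tilde p$ and the arc $\tilde a$ together enclose a disk $D$ in $\tilde\Sigma$; every polygon of the tiling $T$ that meets $\mathcal{B}_k$ and lies inside this disk belongs to $\mathcal{A}_k$, and $\tilde a\subset\mathcal{P}_0$ is disjoint from $\mathcal{B}_k$. The key geometric input is Lemma~\ref{base}: every vertex of $\mathcal{P}_0$ has valence at most $4$ there, which via the nesting construction forces the polygons in $\mathcal{A}_k$ to be attached along their boundary in a controlled way — roughly, each polygon of $\mathcal{A}_k$ meets $\mathcal{P}_{k-1}$ in at most a small number of consecutive edges and vertices.

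The core combinatorial step is then: look at how $\tilde p$ enters and leaves $\mathcal{A}_k$. Since $\tilde p$ does not cross $\mathcal{B}_{k+1}$, the portion of $\tilde p$ in $\mathcal{A}_k$ travels ``along'' the annular region $\mathcal{A}_k$, and where it turns around, it must run along the outer boundary of some polygon $P$ (or of a union of two adjacent polygons) of $\mathcal{A}_k$. I would argue that because each such $P$ is glued to the inner region $\mathcal{P}_{k-1}$ along few edges (the inner edges), the complement — the outer/exterior edges — is the majority, or at least half, of the edges of $P$; and a path that ``wraps around'' $P$ staying outside $\mathcal{B}_{k+1}$ must traverse those outer edges in succession. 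The case where one polygon is a triangle glued to the inside along two edges is exactly where a single polygon does not suffice and one must take the union of two adjacent polygons $P$; Lemma~\ref{base} is what rules out three or more consecutive triangles and keeps this local analysis finite. Finally, the complementary edges of this $P$ — the ones $\tilde p$ does \emph{not} traverse but that the shortcut $\tilde p''$ would — are precisely the inner edges shared with $\mathcal{P}_{k-1}$, so they lie on $\mathcal{B}_{k-1}$ (or interior to $\mathcal{P}_k$) and in particular not on $\mathcal{B}_k$, giving the last assertion.

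I would organize the write-up as: (1) set up the disk $D$ bounded by the bigon and note $\mathcal{A}_k$-polygons inside it; (2) classify, using Lemma~\ref{base} and the attaching rules from the construction of $\lambda_S$, the possible shapes of a polygon $P\in\mathcal{A}_k$ together with the set of its edges that are shared with $\mathcal{P}_{k-1}$, showing this shared set is always at most half the edges (and when it is exactly half, namely a triangle sharing two edges, pass to a union of two polygons); (3) trace the subpath $\tilde p'$ of $\tilde p$ that realizes the innermost ``turn-around'' in $\mathcal{A}_k$ and show it must traverse all the remaining (exterior) edges of $P$ consecutively; (4) read off that the complementary edges are inner edges, hence not on $\mathcal{B}_k$.

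The main obstacle I expect is step (2)–(3): making precise the claim that a path confined to the annulus $\mathcal{A}_k$ and forced to reverse direction must traverse more than half of some polygon's boundary, while simultaneously bookkeeping the geometry near degree-$4$ vertices and the triangle-pair exception. This is essentially a careful case analysis of how polygons in $\mathcal{A}_k$ can be attached to $\mathcal{P}_{k-1}$, and the construction rules for $\lambda_S$ (arcs crossing interior edges, Case~1 vs Case~2 gluings, and the no-gluing phenomenon for adjacent triangles from Lemma~\ref{base}) are exactly what is needed to keep the list of cases short; I would lean on the figures (Figures~\ref{cross}, \ref{glue}, \ref{triangle}) rather than enumerating every configuration in prose.
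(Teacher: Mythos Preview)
Your broad strategy --- propagate a valence bound outward through the $\mathcal{P}_i$ and then analyse the outermost excursion of $\tilde p$ into $\mathcal{A}_k$ --- is exactly the paper's. However, several of the combinatorial details in your plan are wrong in ways that matter.

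First, Lemma~\ref{base} as stated only controls valences in $\mathcal{P}_0$. To say anything about $\mathcal{A}_k$ you need the inductive claim \emph{every vertex on $\mathcal{B}_i$ has valence at most $4$ in $\mathcal{P}_i$}, proved by induction on $i$ using that each vertex of $T$ has valence $2n\ge 6$. This is the actual engine of the argument and occupies the first paragraph of the paper's proof; your phrase ``via the nesting construction'' hides precisely this step.

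Second, the conclusion of that induction is sharper than ``a small number of consecutive edges'': every polygon in $\mathcal{A}_k$ has \emph{at most one} edge on $\mathcal{B}_{k-1}$, and hence two adjacent polygons in $\mathcal{A}_k$ cannot both have an edge on $\mathcal{B}_{k-1}$. This sharp form is what makes the final case analysis short. In particular, your description of the exceptional case is incorrect: a triangle in $\mathcal{A}_k$ is never ``glued to the inside along two edges''. The two-polygon case arises instead when the edge $e_1$ by which $\tilde p$ enters $\mathcal{A}_k$ belongs to a triangle $\Delta$ (with its single edge on $\mathcal{B}_{k-1}$ and no edge on $\mathcal{B}_k$) adjacent to the $m$-gon $P_m$ whose $\mathcal{B}_k$-edges $\tilde p$ then traverses; one takes $P=\Delta\cup P_m$, with $m+1$ exterior edges of which $\tilde p$ traverses $m-1$ in succession.

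Third, the Case~1/Case~2 gluing rules for $\lambda_S$ play no role in this lemma. The proof is pure combinatorics of the tiling $T$, using the construction of $\lambda_S$ only through Lemma~\ref{base} as the base case of the induction; you should not expect Figures~\ref{cross} or~\ref{glue} to help here.
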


\begin{proof}
Suppose $S$ is a simple generating set with $\vert S\vert=n\geq3$ (note that this is a trivial assumption when $\Sigma$ is closed, but will play a role in the general case). Let $T$ be the tiling of $\tilde\Sigma$ corresponding to the lifts of $\Sigma\setminus S$ and note that every vertex of $T$ is of valence at least 6. 

We claim that every vertex on $\mathcal{B}_i$ has valence at most 4 in $\mathcal{P}_{i}$. This is true for $i=0$ by Lemma \ref{base}. Suppose the claim holds for $\mathcal{B}_{i-1}$. Then, since every vertex in $T$ has valence $2n$ there are at least $2n-4$ polygons in $\mathcal{A}_i$ incident to each vertex on $\mathcal{B}_{i-1}$. It follows (since $n\geq 3$) that every polygon in $\mathcal{A}_i$ has at most one edge on $\mathcal{B}_{i-1}$. Therefore every polygon in $\mathcal{A}_i$ share an edge with at most 3 other polygons in $\mathcal{P}_i$, and these edges are consecutive. Hence, if $P_m$ is an $m$-gon in $\mathcal{A}_i$ it has at least $m-3$ consecutive edges on $\mathcal{B}_i$, and $m-2$ such edges if it has no edge on $\mathcal{B}_{i-1}$. If $v$ is a vertex on $\mathcal{B}_i$ incident to a triangle in $\mathcal{A}_i$ with an edge on $\mathcal{B}_{i-1}$ it has valence 4 in $\mathcal{P}_{i}$, and in all other cases it has valence 3 in $\mathcal{P}_{i}$. By induction, this proofs the claim.

Note that since each vertex $v$ on $\mathcal{B}_{k-1}$ has valence at most 4 in $\mathcal{A}_{k-1}$, $v$ has at least $2n-4\geq2$ incident edges  that are in $\mathcal{A}_k$ and not on $\mathcal{B}_{k-1}$. Hence there cannot be two adjacent polygons in $\mathcal{A}_k$ (i.e. polygons that share an edge) that both have an edge on $\mathcal{B}_{k-1}$. In particular, if an $m$-gon in $\mathcal{A}_{k}$ has only $m-3$ of its edges on $\mathcal{B}_k$ then the adjacent polygons in $\mathcal{A}_{k}$ have all but two of it's edges on $\mathcal{B}_k$.  

Now suppose $k>0$ is the largest integer such that $\tilde p$ intersects $\mathcal{B}_k$.  Then $\tilde p$ traverses an edge $e_1$ from $\mathcal{B}_{k-1}$ to $\mathcal{B}_{k}$ and a path (possibly just a vertex) of length $l\geq0$ on $\mathcal{B}_k$ and then an edge $e_2$ from $\mathcal{B}_k$ to $\mathcal{B}_{k-1}$. 

If $l=0$, i.e. the path is just a vertex on $\mathcal{B}_{k}$, then $e_1$ and $e_2$ are edges of the same polygon which must be a triangle. Hence $\tilde p$ traverses 2 edges of this triangle and we are done. 

So suppose $l\geq1$. Then $\tilde p$ traverses an edge on $\mathcal{B}_{k}$ of at least one polygon in $\mathcal{A}_{k}$. Let $P_m$ be the first such $m$-gon, for some $m\geq3$. The path traverses at least $m-3$ edges of $P_m$ lying on $\mathcal{B}_{k}$, and $m-2$ if it has no edge on $\mathcal{B}_k$ (which in particular has to be the case if $m=3$). If $e_1$ is also an edge of $P_m$, then $\tilde p$ traverses at least $m-2$ (and 2 if $m=3$) consecutive edges of $P_m$ and we have proved the proposition. Now suppose $e_1$ is not an edge of $P_m$. Then it must be an edge of a triangle $\Delta$ adjacent to $P_m$ which has an edge on $\mathcal{B}_{k-1}$. Then $P_m$ has no edge on $\mathcal{B}_{k-1}$, and hence $\tilde p$ traverses $m-2$ consecutive edges of $P_m$ (lying on $\mathcal{B}_k$). Let $P$ be the union of $\Delta$ and $P_m$ which has $m+1$ exterior edges. The path $\tilde p$ traverses $m-1$ successive exterior edges of $P$, and none of the complementary edges lie on $\mathcal{B}_k$.  
 \end{proof}

Lemma \ref{Dehn} implies that if $\tilde p$ represents a shortest word, then we can assume that $\tilde p$ is a path contained in $\mathcal{P}_0$. Next we show that even this path can be replaced with a shorter path defining the same word, and hence the path did not represent a shortest word. 

\begin{lemma}\label{path}
Let $\tilde p$ be a path contained in $\mathcal{P}_0$ that forms a bigon with $\tilde a$. Then $\tilde p$ traverses more than half the exterior edges of $\mathcal{P}_0$, in succession. 
\end{lemma}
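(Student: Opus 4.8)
The plan is to analyze the structure of $\mathcal{P}_0$, which by definition is the union of polygons in $T$ met by the arc $\tilde a$. Since $\tilde a$ is a single embedded arc of a leaf of $\tilde\lambda_S$, and each leaf meets each edge of $T$ at most once with the local crossing pattern dictated by the construction in Section \ref{current} (two arcs per polygon joining prescribed pairs of edges), the polygons of $\mathcal{P}_0$ are strung together in a linear fashion: $\tilde a$ enters a polygon through one edge and leaves through another, so $\mathcal{P}_0$ is (topologically) a chain $P^{(1)}, P^{(2)}, \dots, P^{(s)}$ where consecutive polygons share exactly one edge (the edge crossed by $\tilde a$), and these shared edges are precisely the interior edges of $\mathcal{P}_0$. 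In particular $\mathcal{P}_0$ is simply connected and its boundary $\mathcal{B}_0$ is a single cycle of exterior edges; by Lemma \ref{base} every vertex of $\mathcal{P}_0$ has valence at most $4$, which forces the interior edges to be few and isolated in a controlled way — each interior edge, together with the two arcs of $\tilde\lambda_S$ abutting it, forces its endpoints to be vertices of triangles (as in the proof of Lemma \ref{base}).

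Next I would set up the bigon. Write the bigon as bounded by $\tilde p$ on one side and a subarc $\tilde a' \subseteq \tilde a$ on the other, meeting at two vertices $x, y$ of $\tilde S$. Both $x$ and $y$ lie on $\mathcal{B}_0$: indeed $\tilde a'$ starts and ends on the path $\tilde p \subset \mathcal{P}_0$, and since $\tilde p$ is contained in $\mathcal{P}_0$ while $\tilde a$ is transverse to all interior edges, the endpoints of $\tilde a'$ can be taken on the boundary cycle. The subpath $\tilde p$ then cuts the disk $\mathcal{P}_0$ into two sub-disks, and $\tilde a'$ lies entirely in one of them, call it $D$. The key point is a counting/Euler-characteristic argument on $D$: the arc $\tilde a'$ inside the disk $D$ must cross every interior edge of $\mathcal{P}_0$ that lies in $D$, and since $\tilde a'$ and $\tilde p$ co-bound a homotopically trivial loop, $D$ contains no holes; combining this with the valence bound of Lemma \ref{base} (so the interior edges in $D$ form a forest linking only triangles) I can bound the number of exterior edges of $\mathcal{P}_0$ on the $\tilde p$-side versus the $\tilde a'$-side. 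Concretely, the side of $\mathcal{B}_0$ traversed by $\tilde p$ has at most as many edges as there are polygons in $D$ "can absorb," and the valence-$\le 4$ condition forces $\tilde p$ to run along the long side, i.e. to traverse strictly more than half of the total exterior edge count of $\mathcal{P}_0$.

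I would then convert this into the precise statement by induction on the number of polygons in $\mathcal{P}_0$ (equivalently on $s$), or directly: peel off a terminal polygon $P^{(1)}$ of the chain; since it is an $m$-gon meeting the rest of $\mathcal{P}_0$ in a single edge, it contributes $m-1$ exterior edges, and the endpoint of $\tilde a'$ inside $P^{(1)}$ is joined by an arc of $\tilde\lambda_S$ to one of the two edges incident to the vertex opposite (odd case) or to an opposite edge (even case); in either case $\tilde p$ restricted to $P^{(1)}$ must traverse $\ge m-2$ of its $m-1$ exterior edges in succession, and $m-2 > (m-1)/2$ once $m \ge 3$ — with the triangle case $m=3$ giving the familiar "2 out of 3" of Dehn's algorithm. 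Summing the "more than half" estimates along the chain, using that consecutive polygons overlap in exactly one edge, gives the claim for $\mathcal{P}_0$.

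The main obstacle I anticipate is verifying cleanly that the bigon side $\tilde a'$ really must cross every interior edge of the relevant sub-disk $D$, and hence that $\tilde p$ cannot "shortcut" across $\mathcal{P}_0$ along an interior edge — this is where the fine structure of $\lambda_S$ from step (v) of the construction (the crossing/no-crossing rule when gluing odd polygons) is essential, exactly as it was in Lemma \ref{base}. Handling the once-holed polygons (the general, non-closed case deferred to the end of the section) will require a small addendum: there the relevant arcs run to the boundary component rather than to another edge, but since an arc of $\tilde\lambda_S$ entering a once-holed polygon terminates there, such a polygon can only be an endpoint of the chain $\mathcal{P}_0$, and the same "more than half" count applies with $m-1$ exterior edges replaced by the appropriate count.
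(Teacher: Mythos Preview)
Your setup is sound --- $\mathcal{P}_0$ is indeed a linear chain of polygons, and an induction peeling off a terminal polygon is exactly the right shape. But the inductive claim you make is wrong, and this is a genuine gap, not a detail.

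You assert that in a terminal $m$-gon $P^{(1)}$ the path $\tilde p$ must traverse at least $m-2$ of its $m-1$ exterior edges. This is false already for an even $m$-gon with $m\ge 6$: there the arc $\tilde a$ joins the exterior edge $e_1$ to the \emph{opposite} edge (which is the interior edge shared with $P^{(2)}$), and the portion of $\tilde p$ inside $P^{(1)}$ need only run from $e_1$ around one side to a vertex of the interior edge, i.e.\ through roughly $m/2$ exterior edges, not $m-2$. So your per-polygon estimate is off by a factor of two, and ``summing the more-than-half estimates along the chain'' cannot recover the global bound. (A second, smaller slip: the two corners of the bigon are the points where $\tilde a$ meets edges of $\tilde S$, hence interior points of edges, not vertices of $\tilde S$.)

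The paper's proof runs the same induction but tracks a different invariant, one that \emph{does} propagate: it ignores $\tilde p$ entirely and shows by induction on $k$ that the two exterior edges of $\mathcal{P}_0$ carrying the endpoints of $\tilde a$ are \emph{opposite} in $\mathcal{P}_0$ (viewed as a single polygon after erasing the interior edges) when the number of exterior edges is even, and are an edge together with an edge incident to the opposite vertex when it is odd. The inductive step just removes the last polygon $P_m$ and uses the construction of $\lambda_S$ inside $P_m$ to see that ``opposite in $\mathcal{P}$'' becomes ``opposite in $\mathcal{P}_0$''. Once this is established, the conclusion about $\tilde p$ is immediate from the single-polygon case you already handled. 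Note in particular that Lemma~\ref{base} is not used here; the valence bound is needed for Lemma~\ref{Dehn}, not for this step.
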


\begin{proof}
We first consider the case when $\mathcal{P}_0$ is a single polygon, say an $m$-gon $P_m$ of $T$. If $m$ is even, $\tilde a$ is an arc between two opposite sides of the $m$-gon and hence $\tilde p$ must traverse at least $(m/2)+1$ edges of $P_m$. If $m$ is odd, then $\tilde a$ is an arc between an edge and one of the edges incident to the vertex opposite the first edge. Hence $\tilde p$, in order to form a bigon with $\tilde a$, must traverse either $(m+1)/2$ or $(m+3)/2$ edges of $P_m$, and these edges are successive. See Figure \ref{bigonsmall}.

\begin{figure}[h]
\hspace{0cm}
\includegraphics[width=8cm, height=12cm]{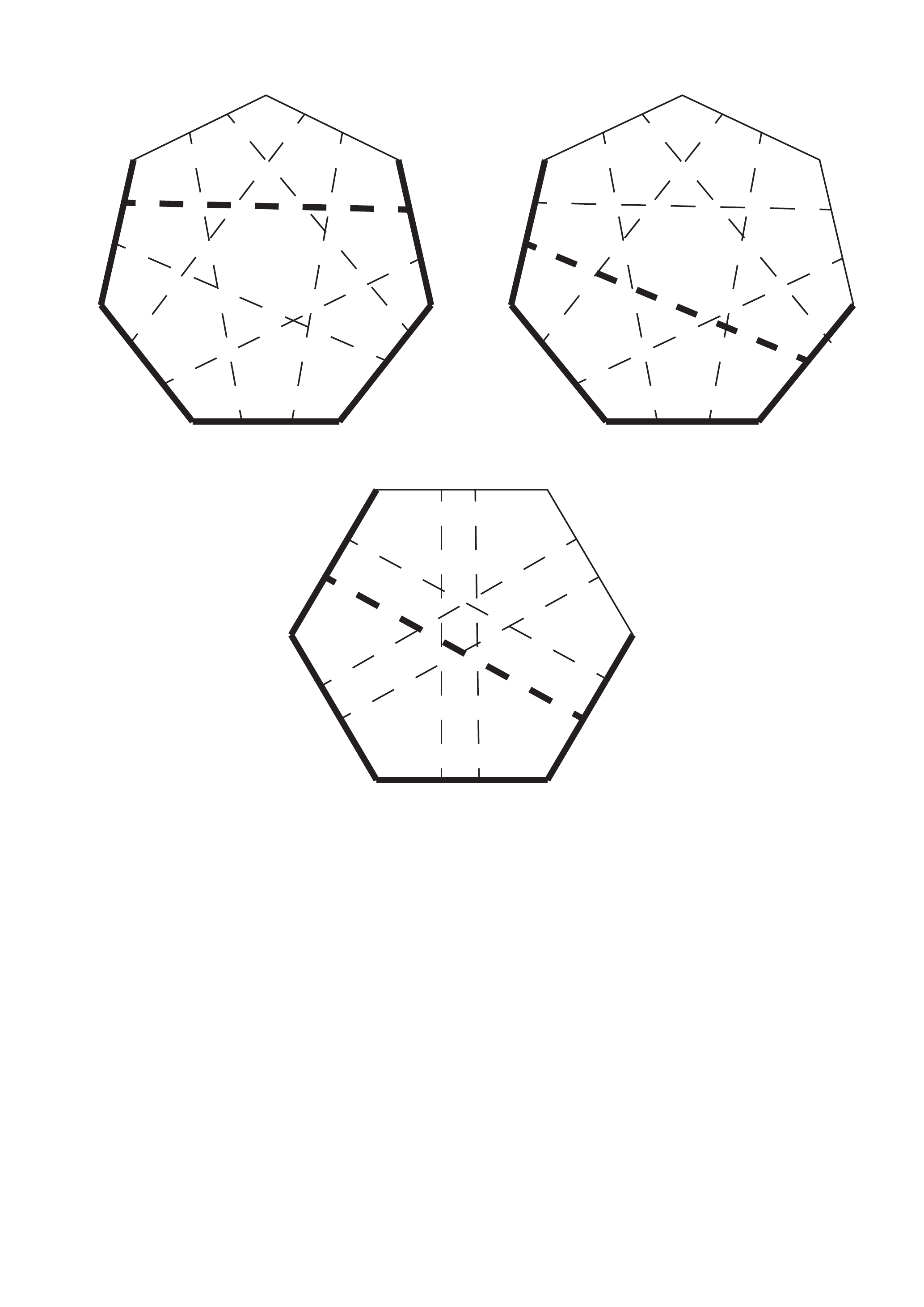}
\vspace{-3.5cm}
\caption{Example of bigons, drawn in bold. The solid lines represent generators (edges) and the dashed lines represent arcs of $\lambda_S$.}
\label{bigonsmall}
\end{figure}

Now we consider the case when $\mathcal{P}_0$ is the (connected) union of $k$ polygons. We can view $\mathcal{P}_0$ as a single polygon by ignoring the interior edges. Suppose $\tilde a$ has one of its endpoints on exterior edge $e_1$ and the other on exterior edge $e_2$. We claim that if $\mathcal{P}_0$ has an even number of exterior edges, then $e_1$ and $e_2$ are opposite each other in $\mathcal{P}_0$, when viewed as a single polygon; if it has an odd number of exterior edges, then $e_2$ is an edge incident to the vertex opposite $e_1$. Hence, applying the argument above, the path $\tilde p$ must traverse at least half the exterior edges of $\mathcal{P}_0$ in order to form a bigon with $\tilde a$. 

We prove the claim by induction on $k$. The base case, $k=1$, follows directly by the definition of $\lambda_S$. Suppose the claim holds for $k-1$, i.e. $\tilde a$ restricted to any connected union $\mathcal{P}$ of $k-1$ polygons is an arc with endpoints on exterior sides opposite each other if it is an even sided polygon, and on an exterior side and the exterior side incident to vertex opposite the first if it is odd sided. Consider $\mathcal{P}_0$, a union of $k$ polygons, and $\tilde a$ with endpoints on (exterior) edges $e_1$ and $e_2$ of $\mathcal{P}_0$. Let $P_m$ be the $m$-sided polygon in $\mathcal{P}_0$ that has $e_2$ as one of its edges. Let $\mathcal{P}=\mathcal{P}_0\setminus P_m$, i.e. the union of the other $k-1$ polygons. Note that $\mathcal{P}$ is connected. Suppose it has $n$ exterior edges. Then $\mathcal{P}_0$ has $m+n-2$ exterior edges.  Consider $\tilde a$ restricted to $\mathcal{P}$. It has its endpoints on edge $e_1$ and some exterior edge $e$ of $\mathcal{P}$. Note that the edge $e$ is also an edge of $P_m$ (and an interior edge of $\mathcal{P}_0)$. 

Now, suppose $n$ is even. By the induction hypothesis, $e$ is opposite $e_1$ in $\mathcal{P}$. 

Assume $m$ is also even. Then, since $\tilde a$ is an arc of a leaf in $\tilde\lambda_S$, $e_2$ is opposite $e$ in $P_m$. We have $m+n-2$ even, and $e_1$ and $e_2$ opposite sides in $\mathcal{P}_0$ as desired. Now assume $m$ is odd. Then $e_2$ is an edge incident the vertex opposite $e$ in $P_m$. We have $m+n-2$ odd, and $e_3$ is an (exterior) edge incident to vertex opposite $e_1$ in $\mathcal{P}_0$. 

Similar argument proves the induction step when $n$ is odd. (See Figure \ref{mediumbigon}).
\end{proof}
\begin{figure}[h]
\hspace{0cm}
\includegraphics[width=8cm, height=12cm]{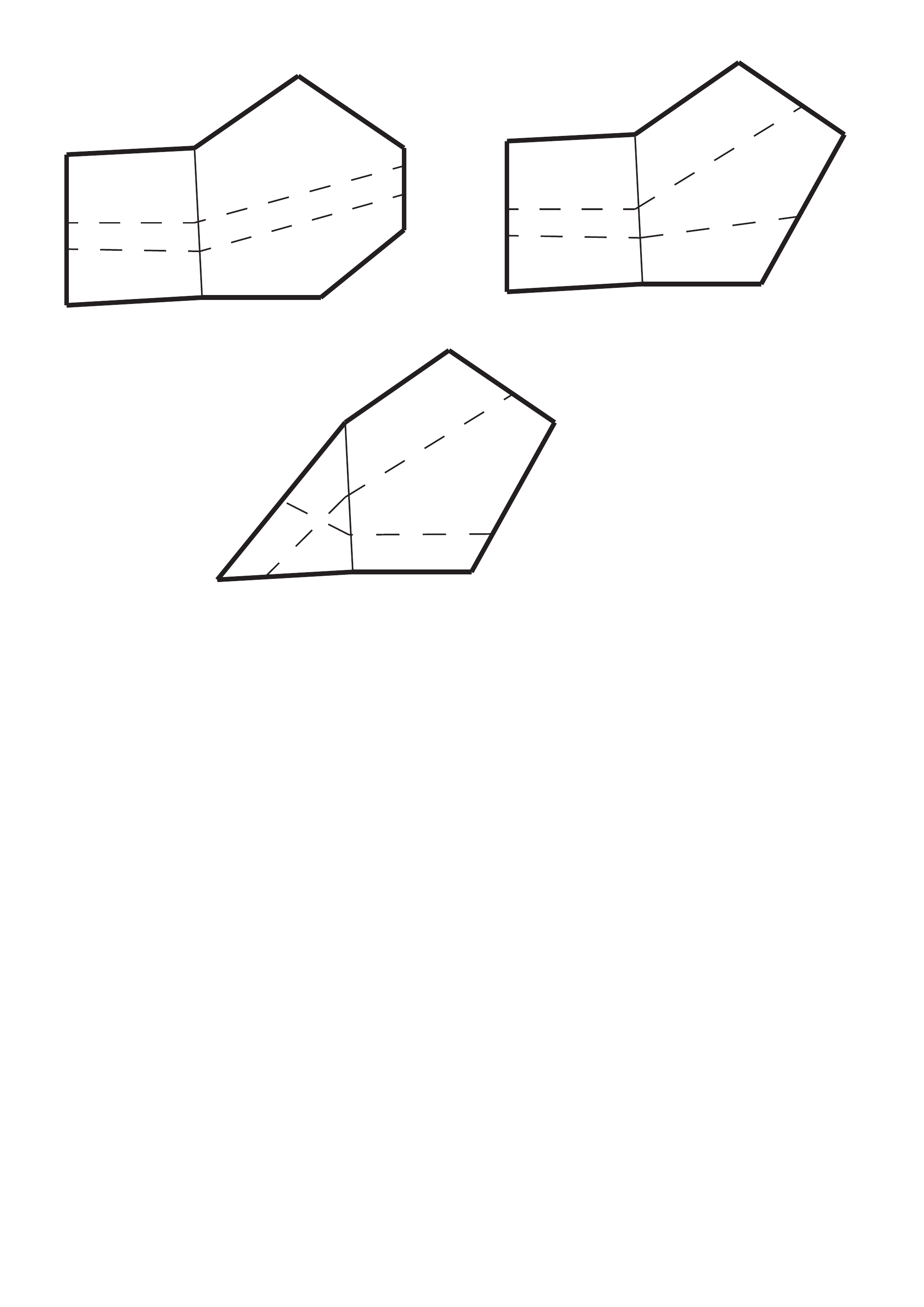}
\vspace{-6cm}
\caption{$\mathcal{P}_0$ union of polygons. Interior edge in thin line, exterior edges in bold, arcs of $\tilde\lambda_S$ that intersect interior edge shown in dashed lines.}
\label{mediumbigon}
\end{figure}

Next we briefly consider the case when $\Sigma$ has boundary components, and explain how the lemmas above also hold in this case. As above, let $\tilde\Sigma$ be its universal cover and $T$ the tiling corresponding to the simple generating set $S$. Since $\Sigma$ is not closed, some of the tiles in $T$ will be non-compact. Let $\tilde p$ be a path in $\tilde S$ that form a bigon with some arc $\tilde a$ of a leaf $\tilde\alpha\subset\tilde\lambda_S$. We claim that, with the notation as above, all tiles in $A_i$ for all $A_i$ whose interior $\tilde p$ or $\tilde a$ intersect, are in fact compact. To see this, not first that the claim follows trivially for $i>0$ since a bigon bounds a compact region. Hence we only need to show that the claim holds for $A_0=P_0$, and hence that every tile that $\tilde a$ intersects is compact. Suppose not, i.e. there exists a non-compact tile $\tilde P$ that $\tilde a$ intersects. Let $P$ be the corresponding one-holed polygon on $\Sigma\setminus S$ and $\alpha\subset\lambda_S$ the leaf corresponding to $\tilde \alpha$. But then, by the construction of $\lambda_S$, $\alpha$ has an endpoint on the boundary component, a contradiction to the fact $\tilde a$ forms a bigon with $\tilde p$. Hence the claim is true, and when $\vert S\vert\geq 3$ we can apply Lemmas \ref{Dehn} and \ref{path} also to non-compact surfaces. 

Lastly, we need to consider the case $\vert S\vert=2$. Then $\Sigma$ is a one-holed torus $\mathbb{T}$ and $S$ is a free basis for $\pi_1(\mathbb{T},p)$ and hence it differs from the standard generating set by an element of the mapping class group of $\mathbb{T}$. So $\mathbb{T}\setminus S$ is a one-holed square and the associated $\gamma_S$ consist of arcs between the hole and itself. It is clear, with similar argument as above, that no path in $\tilde S$ in the tiling of non-compact squares can form a bigon with such an arc. 

Armed with Lemmas \ref{Dehn} and \ref{path}, we can now prove Proposition \ref{bigonprop}, showing that a word is shortest (or cyclically shortest) if and only if the corresponding loop (or curve) does not make a bigon with $\lambda_S$. 

\begin{proof}[Proof of Proposition \ref{bigonprop}]
Suppose $w$ is shortest (or cyclically shortest) but forms a bigon with $\lambda_S$. Consider the lifts to the universal cover $\tilde\Sigma$, tiled by $T$. Each lift of $w$ is a path in $\tilde S$, choose one of them and call it $\tilde p$. This path forms a bigon with some arc $\tilde a$ of a leaf $\tilde\alpha\subset\tilde\lambda_S$. Let  $\mathcal{P}_0$ be the union of the polygons $\tilde a$ intersects. By Lemma \ref{Dehn}, since $w$ is shortest, $\tilde p$ must be contained in $\mathcal{P}_0$. By Lemma \ref{path}, if $\tilde p$ forms a bigon with $\tilde a$ it must traverse more than half the exterior edges of $\mathcal{P}_0$ and can hence be replaced with a shorter path, contradicting the fact that $w$ is shortest. Hence $w$ cannot form a bigon with $\lambda_S$. 

Now suppose that $w$, viewed as a loop, does not form any bigons with $\lambda_S$. Then $\vert w\cap\lambda_S\vert=\iota(w,\lambda_S)$. Suppose there exists a word $w'$ homotopic to $w$ that is shorter than $w$. Then it follows from Lemma \ref{atleast} that $\vert w'\cap\lambda_S\vert<\vert w\cap\lambda_S\vert=\iota(w,\lambda_S)=\iota(w',\lambda_S)$, a contradiction.

Similar argument proves the statement when viewing $w$ as a curve. 
\end{proof}


\section{Counting curves}\label{counting}

In this section we will prove Theorem \ref{count} but first we need to recall some facts and definitions about \emph{(geodesic) currents}. The reader is referred to \cite{Bon1, Bon2, Ota, AL} for more details. Endow the interior of $\Sigma$ with a complete hyperbolic metric, say of finite volume, to obtain the hyperbolic surface $\Sigma_1$. As introduced by Bonahon \cite{Bon1}, a (geodesic) current is a $\pi_1(\Sigma)$-invariant Radon measure on the space $\mathcal{G}(\tilde\Sigma_1)$ of geodesics in the universal cover of $\Sigma_1$. Let $\mathcal{C}(\Sigma)$ denote the set of all currents on $\Sigma$. $C(\Sigma)$ is independent of the choice of finite volume hyperbolic metric in the sense that if $\Sigma_2$ is another such structure on $\Sigma$, then the corresponding universal covers $\tilde{\Sigma}_1$ and $\tilde{\Sigma}_2$ are quasi-isometric and this quasi-isometry extends to a homeomorphism of their ideal boundaries, and hence of the spaces $\mathcal{G}(\tilde\Sigma_1)$ and $\mathcal{G}(\tilde\Sigma_2)$. 

We say a current $\lambda\in\mathcal{C}(\Sigma)$ is \emph{filling} if every geodesic in $\Sigma$ is transversally intersected by some geodesic in the support of $\lambda$. 

Any union of curves and arcs $\lambda\subset\Sigma$ defines a current in the following way: The set of lifts $\tilde\lambda$ of $\lambda$ to the universal cover $\tilde\Sigma$ is a $\pi_1(\Sigma)$-invariant discrete subset of $\mathcal{G}(\tilde\Sigma)$ and we can define a measure $\mu_{\lambda}$ which is supported on $\tilde\lambda$ by defining
$$\mu_{\lambda}(B)=\vert B\cap\tilde\lambda\vert$$
for any Borel set $B\subset\mathcal{G}(\tilde\Sigma)$. By abuse of notation we will identify $\lambda$ and the current it defines. This way the set of curves on $\Sigma$ can be viewed as a subset of $\mathcal{C}(\Sigma)$. In fact, the set of all \emph{weighted curves} $\{t \delta\,:\, t\in\mathbb{R}_+,\,\delta\mbox{ a curve}\}$ is dense in $\mathcal{C}(\Sigma)$. 

Recall that a \emph{measured lamination} on $\Sigma$ is a closed subset $\mathcal{L}\subset\Sigma$ of simple, pairwise disjoint geodesics together with a transverse measure supported on $\mathcal{L}$. Let $\ML(\Sigma)$ denote the space of measured laminations on $\Sigma$. (For the same reason as for currents, this makes sense without referencing to a metric on $\Sigma$). As above, we can consider the set of lifts $\tilde{\mathcal{L}}$ of $\mathcal{L}$ to the universal cover of $\Sigma$ and identify it with a measure on $\mathcal{G}(\tilde\Sigma)$ and this measure agrees with the transverse measure. In this way we can view $\ML(\Sigma)$ as a subset of $\mathcal{C}(\Sigma)$. 

Recall that if $\Sigma$ is of genus $g$ with $r$ boundary components, $\ML(\Sigma)$ is homeomorphic  to $\mathbb{R}^{6g-6+2r}$ and has thus a natural topology. The space $\ML(\Sigma)$ also has a mapping class group invariant integral PL-manifold structure (where integral means that the change of charts are given by linear transformations with integral coefficients). It is endowed with a mapping class group invariant measure in the Lebesgue class, the so-called {\em Thurston measure} $\mu_{\Thu}$ (see \cite{thurstonnotes}). It is an infinite but locally finite measure, positive on non-empty open sets, and satisfies
$$\mu_{\Thu}(L\cdot U)=L^{6g-6+2r}\mu_{\Thu}(U)$$ 
for all $U\subset\ML(\Sigma)$ and $L>0$. On charts, the measure $\mu_{\Thu}$ is just the standard Lebesgue measure.

In \cite{Bon1} Bonahon defines a symmetric map $\iota(\cdot,\cdot): \mathcal{C}(\Sigma)\times\mathcal{C}(\Sigma)\to\mathbb{R}_{+}$ which extends the geometric intersection form of curves on $\Sigma$ to the space of currents. This map, called the \emph{intersection form}, is symmetric, bi-homogenous, and invariant under the action of the mapping class group of $\Sigma$. When $\Sigma$ is closed, the intersection form is also continuous, but this fails in general. However, this problem can be solved if we work with convex cocompact surfaces (see \cite{DLR}) but in any case this issue has no relevance to this paper. 

Recall also that for each hyperbolic, or more generally any negatively curved, structure $X$ on $\Sigma$ there exists a filling current $\lambda_X$ associated to it, called the \emph{Liouville current} (see \cite{Bon1, Ota}) which satisfies 
\begin{equation}\label{liouville}
\iota(\gamma,\lambda_X)=\ell_X(\gamma)
\end{equation}
for all curves $\gamma\subset\Sigma$. In particular, one can view the current $\lambda_S$ associated to a simple generating set $S$ as a combinatorial version of the Liouville current. It is a theorem of Otal \cite[Theorem 2]{Ota} that a current is determined by its intersection with all curves, that is, two currents $\lambda, \mu\in\mathcal{C}(\Sigma)$ satisfy $\iota(\lambda,\gamma)=\iota(\mu, \gamma)$ for all curves $\gamma\subset\Sigma$ if and only if $\lambda=\mu$. Hence we obtain from Theorem \ref{currentfree} that the current $\lambda_S$ associated to a simple generating set is unique:

\begin{cor}\label{unique}
The collection of curves and arcs $\lambda_S$ satisfying Theorem \ref{currentfree} is unique up to homotopy.\qed
\end{cor}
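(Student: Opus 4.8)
The plan is to derive Corollary \ref{unique} as a direct consequence of Theorem \ref{currentfree} together with Otal's rigidity theorem \cite[Theorem 2]{Ota}. First I would recall that any finite collection of weighted curves and arcs on $\Sigma\setminus\{p\}$ defines a current on $\Sigma$ via the construction described above (taking lifts to $\mathcal{G}(\tilde\Sigma)$ and counting), so it makes sense to compare two such collections as elements of $\mathcal{C}(\Sigma)$ and to invoke Otal's theorem, which asserts that a current is determined by its intersection numbers with all closed curves on $\Sigma$.

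Now suppose $\lambda_S$ and $\lambda_S'$ are both collections of weighted curves and arcs satisfying the conclusion of Theorem \ref{currentfree}, i.e. $\iota(\gamma,\lambda_S)=\ell_S(\gamma)=\iota(\gamma,\lambda_S')$ for every closed curve $\gamma\subset\Sigma$. Then, viewing $\lambda_S$ and $\lambda_S'$ as currents, they have equal intersection number with every closed curve. By Otal's theorem this forces $\lambda_S=\lambda_S'$ as currents. The final step is to translate the equality of currents back into a statement about the geometric objects: since the current determined by a collection of weighted curves and arcs records exactly the homotopy classes of its components together with their weights (a collection of weighted curves and arcs being in particular a discrete multicurve-with-arcs, whose associated current is supported on the corresponding $\pi_1(\Sigma)$-orbit of geodesic lifts), equality of the currents is equivalent to the two collections being homotopic (as weighted unions of curves and arcs). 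This gives uniqueness up to homotopy.

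The only mild subtlety, and the step I would be most careful about, is the last translation: one must check that the map from (homotopy classes of) weighted unions of curves and arcs into $\mathcal{C}(\Sigma)$ is injective. This is essentially immediate from the description of $\mu_\lambda$ — distinct geodesic representatives give distinct points of $\mathcal{G}(\tilde\Sigma)$ and hence distinguishable atoms of the measure, and the weights are recovered as the masses of these atoms — but since arcs are involved (and Otal's theorem as usually stated concerns closed curves, with arcs handled by doubling or by the conventions fixed earlier in the paper), I would make explicit that the intersection form and Otal rigidity apply to the currents at hand in our setting. Granting that, the corollary follows with no further computation; accordingly the statement is marked \qed in the text, and no separate proof environment is needed here beyond the remarks above.
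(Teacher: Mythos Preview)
Your proposal is correct and follows exactly the argument the paper gives in the paragraph immediately preceding the corollary: two collections satisfying Theorem~\ref{currentfree} have equal intersection with every curve, so Otal's theorem forces equality as currents, hence equality up to homotopy. The subtlety you flag about arcs and the injectivity of the passage from weighted multicurves-with-arcs to currents is not spelled out in the paper either; it is taken as implicit in the identification of $\lambda_S$ with a current set up earlier in Section~\ref{counting}.
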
 


After these preliminary comments we return now to the setting of Theorem \ref{count}. More concretely, suppose $\Sigma$ has genus $g$ and $r$ boundary components, where $3g+r>3$. Let $\MCG(\Sigma)$ denote the mapping class group of $\Sigma$. Let $\gamma_0\subset\Sigma$ be a curve and consider the mapping class group orbit $\MCG(\Sigma)\cdot\gamma_0$ of $\gamma_0$. That is:
$$\MCG(\Sigma)\cdot\gamma_0 = \{\gamma\in\Sigma\,:\, \gamma=f(\gamma_0)\mbox{ for some } f\in\MCG(\Sigma)\}.$$

Let $X$ be a complete hyperbolic structure on $\Sigma$, and let $\ell_X(\gamma)$ denote the length with respect to $X$ of the unique geodesic in the homotopy class of $\gamma$. In \cite{Mir08} and \cite{MM2} Mirzakhani proved that for any curve $\gamma_0$, the number of (homotopy classes of) curves in the mapping class group orbit of $\gamma_0$ of length bounded by $L$ is asymptotic to a polynomial in $L$. More precisely she showed the following (stated here in the form we will use):

\begin{thm}[Mirzakhani, \cite{MM2} Theorem 1.1]\label{mir}
Let $X$ be a complete hyperbolic metric on $\Sigma$. Then for every immersed curve $\gamma_0$, 
 $$\lim_{L\to\infty}\frac{\vert\{\gamma\in\MCG(\Sigma)\cdot\gamma_0\,:\,\ell_X(\gamma)\leq L\}\vert}{L^{6g-6+2r}} =C_{\gamma_0}M_X$$
where $K_{\gamma_0}>0$ is a constant depending only on the mapping class group orbit of $\gamma_0$ and $M_X=\mu_{Thu}(\{\nu\in\ML(\Sigma)\,:\,\ell_X(\nu)\leq 1\})$.
\end{thm}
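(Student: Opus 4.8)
The plan is to recast the orbit-counting problem as the evaluation of a rescaled empirical measure on a fixed length ball, to prove that these rescaled measures equidistribute to a multiple of the Thurston measure, and finally to read off the stated asymptotics. Set $N(L)=\vert\{\gamma\in\MCG(\Sigma)\cdot\gamma_0\,:\,\ell_X(\gamma)\leq L\}\vert$ and package the whole orbit into a family of measures on the space of currents,
$$m_L=\frac{1}{L^{6g-6+2r}}\sum_{\gamma\in\MCG(\Sigma)\cdot\gamma_0}\delta_{\gamma/L},$$
where $\gamma/L$ denotes the current $\gamma$ scaled by $1/L$. Since $\ell_X(\cdot)=\iota(\cdot,\lambda_X)$ by \eqref{liouville} and the intersection form is bi-homogeneous, one has $\iota(\gamma/L,\lambda_X)=L^{-1}\ell_X(\gamma)$, so the condition $\ell_X(\gamma)\leq L$ is exactly the condition that $\gamma/L$ lie in the fixed ball $B=\{\mu\in\mathcal{C}(\Sigma)\,:\,\iota(\mu,\lambda_X)\leq1\}$. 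Consequently
$$\frac{N(L)}{L^{6g-6+2r}}=m_L(B),$$
and the theorem reduces to computing $\lim_{L\to\infty}m_L(B)$. This first step is pure bookkeeping with the homogeneity of $\iota$ and the scaling $\mu_{\Thu}(L\cdot U)=L^{6g-6+2r}\mu_{\Thu}(U)$.

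The heart of the matter, and the main obstacle, is the equidistribution statement
$$m_L\longrightarrow C_{\gamma_0}\,\mu_{\Thu}\qquad(L\to\infty)$$
in the weak-$*$ topology. A key observation that explains why the limit is a multiple of the Thurston measure is that self-intersection number is $\MCG$-invariant and scales quadratically: $\iota(\gamma/L,\gamma/L)=L^{-2}\iota(\gamma_0,\gamma_0)\to0$ along the whole orbit. Hence any weak limit of the atoms has vanishing self-intersection and is therefore supported on $\ML(\Sigma)\subset\mathcal{C}(\Sigma)$, where the natural mapping-class-group-invariant measure in the Lebesgue class is precisely $\mu_{\Thu}$. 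Proving the equidistribution itself is where Mirzakhani's machinery enters. For simple $\gamma_0$ I would follow her integration scheme: integrate a test function against $m_L$ over the moduli space $\mathcal{M}_{g,r}$, unfold the orbit sum using the covering $\mathcal{M}_{g,r}^{\gamma_0}\to\mathcal{M}_{g,r}$ cut out along $\gamma_0$, and identify the result through the Weil--Petersson volume polynomials, whose top-degree part computes the Thurston volume of a length ball. This produces both the normalizing power $L^{6g-6+2r}$ and the factor $M_X$. The passage from this statement integrated over $X$ to the pointwise statement at a fixed $X$ requires the ergodicity of the $\MCG(\Sigma)$-action on $(\ML(\Sigma),\mu_{\Thu})$ together with uniform control to upgrade convergence in mean to convergence for each $X$. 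For general immersed $\gamma_0$ the same scheme applies, the self-intersection data and the symmetries of $\gamma_0$ being absorbed into the positive constant $C_{\gamma_0}$.

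Finally I would deduce the limit from the equidistribution. Under the identification $\ML(\Sigma)\cong\mathbb{R}^{6g-6+2r}$, the measure $\mu_{\Thu}$ lies in the Lebesgue class, so the boundary sphere $\partial B=\{\mu\,:\,\iota(\mu,\lambda_X)=1\}$ is $\mu_{\Thu}$-null. By the portmanteau theorem the weak-$*$ convergence $m_L\to C_{\gamma_0}\mu_{\Thu}$ then yields $m_L(B)\to C_{\gamma_0}\mu_{\Thu}(B)$. Since $\mu_{\Thu}(B)=\mu_{\Thu}(\{\nu\in\ML(\Sigma)\,:\,\ell_X(\nu)\leq1\})=M_X$, we obtain $N(L)/L^{6g-6+2r}\to C_{\gamma_0}M_X$; positivity is clear because $C_{\gamma_0}>0$ and $M_X>0$, the latter since $\mu_{\Thu}$ is positive on non-empty open sets and $B$ has non-empty interior. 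Thus Steps~1 and~3 are routine once the homogeneity of the intersection form and the Lebesgue-class property of $\mu_{\Thu}$ are in hand, and the entire difficulty is concentrated in the equidistribution of Step~2, which relies on Mirzakhani's volume computations and the ergodicity of the mapping class group action on measured laminations.
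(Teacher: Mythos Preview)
The paper does not prove this statement at all: Theorem~\ref{mir} is quoted from Mirzakhani's work \cite{MM2} and used as a black box in the proof of Theorem~\ref{count}. There is therefore no ``paper's own proof'' to compare your proposal against.

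That said, your sketch is a faithful outline of Mirzakhani's actual strategy: encode the orbit in rescaled counting measures $m_L$ on $\mathcal{C}(\Sigma)$, argue that any subsequential limit is supported on $\ML(\Sigma)$ because self-intersection scales like $L^{-2}$, identify the limit as $C_{\gamma_0}\mu_{\Thu}$ via her integration/unfolding argument and ergodicity of the $\MCG$-action, and then evaluate on the unit ball using that $\partial B$ is $\mu_{\Thu}$-null. You are right that the entire difficulty is the equidistribution step, and your description of it is honest but not a proof: the passage from the moduli-space-averaged statement to the pointwise one, and the treatment of non-simple $\gamma_0$, are exactly the places where Mirzakhani's arguments in \cite{Mir08,MM2} do substantial work that your sketch only names. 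So as a proof proposal it is a correct roadmap, but it defers precisely the content that makes the theorem hard; for the purposes of this paper, simply citing \cite{MM2} is what is done and what is appropriate.
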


In \cite{ES} we investigated the existence of the more general limit 
\begin{equation}\label{limitcurrent}
\lim_{L\to\infty}\frac{\vert\{\gamma\in\MCG(\Sigma)\cdot\gamma_0\,:\, \iota(\gamma,\lambda)\leq L\}\vert}{L^{6g-6+2r}} 
\end{equation}
where $\lambda$ is any filling current on $\Sigma$. We showed that

\begin{thm}[Erlandsson-Souto, \cite{ES} Corollary 4.4]\label{es} 
Let $\lambda_1, \lambda_2\in\mathcal{C}(\Sigma)$ be two filling currents. Then
\begin{equation*}
\lim_{L\to\infty}\frac{\vert\{\gamma\in\MCG(\Sigma)\cdot\gamma_0\,:\,\iota(\gamma,\lambda_1)\leq L\}\vert}{\vert\{\gamma\in\MCG(\Sigma)\cdot\gamma_0\,:\,\iota(\gamma,\lambda_2)\leq L\}\vert}=\frac{\mu_{\Thu}(\{\nu\in\ML(\Sigma)\,:\,\iota(\nu,\lambda_1)\leq 1\})}{\mu_{\Thu}(\{\nu\in\ML(\Sigma)\,:\,\iota(\nu,\lambda_2)\leq 1\})}
\end{equation*}
for every immersed curve $\gamma_0\subset\Sigma$.
\end{thm}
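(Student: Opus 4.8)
The plan is to deduce the ratio formula from a single equidistribution statement: that the rescaled counting measures associated with the orbit $\MCG(\Sigma)\cdot\gamma_0$ converge weakly to a fixed multiple of the Thurston measure. Once that is in hand, the two filling currents $\lambda_1,\lambda_2$ enter only through the regions they cut out of $\ML(\Sigma)$, and the orbit-dependent constant cancels in the quotient. Concretely, setting $N=6g-6+2r$, I would introduce for each $L>0$ the Radon measure on $\mathcal{C}(\Sigma)$ given by
$$\nu_L \;=\; \frac{1}{L^{N}}\sum_{\gamma\in\MCG(\Sigma)\cdot\gamma_0}\delta_{\gamma/L}.$$
Since the intersection form is homogeneous, $\iota(\gamma/L,\lambda)\le 1$ is equivalent to $\iota(\gamma,\lambda)\le L$, so writing $U_\lambda=\{\mu\in\mathcal{C}(\Sigma):\iota(\mu,\lambda)\le 1\}$ one has exactly $\nu_L(U_\lambda)=L^{-N}\vert\{\gamma\in\MCG(\Sigma)\cdot\gamma_0:\iota(\gamma,\lambda)\le L\}\vert$. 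Thus it suffices to prove $\nu_L\to C_{\gamma_0}\,\mu_{\Thu}$ weakly, with $\mu_{\Thu}$ regarded as a measure on $\ML(\Sigma)\subset\mathcal{C}(\Sigma)$ and $C_{\gamma_0}>0$ depending only on the orbit, and then to evaluate this limit on $U_{\lambda_1}$ and $U_{\lambda_2}$ and divide.

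I would first establish relative compactness of $\{\nu_L\}$: a filling current confines the rescaled curves, forcing $U_\lambda\cap\ML(\Sigma)$ to be compact and the counting function to grow at most polynomially of degree $N$ (which can be imported from Mirzakhani's Theorem \ref{mir} applied to a Liouville current), so the masses $\nu_L(U_\lambda)$ stay bounded and no mass escapes to infinity. Hence every sequence $L_n\to\infty$ admits a subsequence along which $\nu_L$ converges weakly to some Radon measure $\mu_\infty$. Any such limit is $\MCG(\Sigma)$-invariant, since the defining sum runs over a single orbit and the mapping class group acts by homeomorphisms commuting with the $\mathbb{R}_+$-scaling. Crucially, $\mu_\infty$ is supported on $\ML(\Sigma)$: self-intersection number is a mapping class group invariant, so every $\gamma$ in the orbit has the same self-intersection $i_0=\iota(\gamma_0,\gamma_0)$, whence $\iota(\gamma/L,\gamma/L)=i_0/L^2\to0$, and by lower semicontinuity of self-intersection under weak convergence every current in the support of $\mu_\infty$ has zero self-intersection, hence is a measured lamination. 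Finally, $\mu_\infty$ is homogeneous of degree $N$, inherited from the normalization by $L^{N}$.

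Next I would invoke the classification of such measures: a locally finite, $\MCG(\Sigma)$-invariant measure on $\ML(\Sigma)$ lying in the Lebesgue class is a scalar multiple of $\mu_{\Thu}$, so $\mu_\infty=c\,\mu_{\Thu}$ for some $c\ge0$. To pin down $c$ independently of the subsequence, I would apply the convergence to the Liouville current $\lambda_X$ of a hyperbolic metric $X$, for which $\iota(\gamma,\lambda_X)=\ell_X(\gamma)$ by \eqref{liouville} and $U_{\lambda_X}\cap\ML(\Sigma)$ has $\mu_{\Thu}$-null boundary (by homogeneity). On one hand $\nu_{L_n}(U_{\lambda_X})\to c\,\mu_{\Thu}(U_{\lambda_X}\cap\ML(\Sigma))=c\,M_X$; on the other hand $\nu_L(U_{\lambda_X})=L^{-N}\vert\{\gamma:\ell_X(\gamma)\le L\}\vert\to C_{\gamma_0}M_X$ by Theorem \ref{mir}. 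Since $M_X>0$, this forces $c=C_{\gamma_0}$ for every subsequential limit, so the full family $\nu_L$ converges to $C_{\gamma_0}\mu_{\Thu}$ with $C_{\gamma_0}>0$.

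It then remains to evaluate on a general filling current $\lambda$. Because $\lambda$ is filling, $\iota(\cdot,\lambda)$ is positive and proper on $\ML(\Sigma)$, so $U_\lambda\cap\ML(\Sigma)$ is compact, and its boundary $\{\nu:\iota(\nu,\lambda)=1\}$ is $\mu_{\Thu}$-null by the scaling $\mu_{\Thu}(L\cdot U)=L^N\mu_{\Thu}(U)$; thus $U_\lambda$ is a continuity set for the limit and the portmanteau theorem yields
$$\frac{\vert\{\gamma\in\MCG(\Sigma)\cdot\gamma_0:\iota(\gamma,\lambda)\le L\}\vert}{L^{N}}=\nu_L(U_\lambda)\longrightarrow C_{\gamma_0}\,\mu_{\Thu}\bigl(\{\nu\in\ML(\Sigma):\iota(\nu,\lambda)\le1\}\bigr).$$
Applying this to $\lambda_1$ and to $\lambda_2$ and taking the quotient, the factor $C_{\gamma_0}$ cancels and leaves exactly the asserted ratio of Thurston measures. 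The hard part is the weak-convergence step: identifying every subsequential limit as a multiple of $\mu_{\Thu}$ rests on the uniqueness of invariant measures on $\ML(\Sigma)$ in the Lebesgue class, and the support argument must be run with only lower semicontinuity of the intersection form, since on surfaces with boundary the form need not be continuous; one must also check carefully that the tightness / no-escape-of-mass step genuinely uses the filling hypothesis to keep the rescaled curves within a compact region.
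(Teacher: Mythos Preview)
This theorem is not proved in the paper; it is quoted from \cite{ES} and used as a black box in the proof of Theorem~\ref{count}. There is therefore no in-paper argument to compare your proposal against.

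That said, your outline is essentially the strategy behind \cite{ES}: form the rescaled counting measures $\nu_L$ on $\mathcal{C}(\Sigma)$, extract subsequential limits, show they are $\MCG(\Sigma)$-invariant and supported on $\ML(\Sigma)$, identify them as multiples of $\mu_{\Thu}$, and fix the multiple using Mirzakhani's theorem applied to a Liouville current. The place where your write-up is too fast is the identification step. You assert that ``a locally finite, $\MCG(\Sigma)$-invariant measure on $\ML(\Sigma)$ lying in the Lebesgue class is a scalar multiple of $\mu_{\Thu}$'' and immediately conclude $\mu_\infty=c\,\mu_{\Thu}$; but nothing in your argument establishes that $\mu_\infty$ \emph{is} in the Lebesgue class. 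Masur's ergodicity of $\mu_{\Thu}$ gives the stated uniqueness only after absolute continuity is known, and $\MCG(\Sigma)$-invariance together with degree-$N$ homogeneity does not by itself rule out mass on lower-dimensional $\MCG(\Sigma)$-invariant subsets of $\ML(\Sigma)$. Excluding such singular components --- whether by a direct argument or by invoking a full measure-classification theorem on $\ML(\Sigma)$ --- is exactly the substantive content one has to supply here, and it should be named explicitly rather than absorbed into a single clause.
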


We will use the two theorems above to prove Theorem \ref{count}.

\begin{proof}[Proof of Theorem \ref{count}]
Note that Theorem \ref{es} implies that the \emph{existence} of limit \eqref{limitcurrent} is independent of the choice of filling current $\lambda$. Let $X$ be a complete hyperbolic metric on $\Sigma$ and $\lambda_X$ the corresponding Liouville current. Then, substituting $\lambda_X$ for $\lambda$, limit \eqref{limitcurrent} exists by Mirzakhani (Theorem \ref{mir}). Hence limit \eqref{limitcurrent} exists for any filling current $\lambda\in\mathcal{C}(\Sigma)$. Moreover,   
\begin{equation}\label{finallimit}
\lim_{L\to\infty}\frac{\vert\{\gamma\in\MCG(\Sigma)\cdot\gamma_0\,:\,\iota(\gamma,\lambda)\leq L\}\vert}{L^{6g-6+2r}} = C_{\gamma_0}M_{\lambda}
\end{equation} 
for any filling current $\lambda$, where $M_{\lambda}=\mu_{Thu}(\{\nu\in\ML(\Sigma)\,:\,\iota(\nu,\lambda)\leq 1\})$. 

In particular, \eqref{finallimit} holds when replacing $\lambda$ with the current $\lambda_S$ associated to a simple generating set $S$ for $\pi_1(\Sigma,p)$. Hence, applying Theorem \ref{currentfree}, we have
$$\lim_{L\to\infty}\frac{\vert\{\gamma\in\MCG(\Sigma)\cdot\gamma_0\,:\,|\gamma|_S^c\leq L\}\vert}{L^{6g+2r-6}}=C_{\gamma_0}\cdot \mu_{Thu}(\{\nu\in\ML\,:\,|\nu|^c_{S}\leq1\}),$$
proving Theorem \ref{count}.
\end{proof}

Note that the proof of Theorem \ref{count}  gives no information on the constant $C_{\gamma_0}$. It is in fact a very interesting question to investigate how it depends on the curve $\gamma_0$. Recall that, as mentioned in the introduction, Chas \cite{Cha} has conjectured that the ratio $\frac{C_{\gamma_0}}{C_{\gamma_1}}$, for two curves $\gamma_0$ and $\gamma_1$, is always rational.   
\bibliographystyle{amsalpha}
\bibliography{ref}

\end{document}